\numberwithin{equation}{section}
\newtheorem{defn}{Definition}[section]
\newtheorem{theorem}{Theorem}[section]
\newtheorem{corollary}[theorem]{Corollary}
\newtheorem{lemma}[theorem]{Lemma}
\newtheorem{prop}[theorem]{Proposition}
\newtheorem{remark}[theorem]{Remark}
\def \begineq{\begin{equation}}
\def \endeq{\end{equation}}
\def \bb{\mathbb}
\def \RR{{\bb{R}}}
\def \ZZ{{\bb{Z}}}
\def \({\left(}
\def \){\right)}
\def \<{\langle}
\def \>{\rangle}
\def \bar{\overline}
\def \Dsum{\bigoplus}
\def \dsum{\oplus}
\def \tensor{\otimes}
\def \union{\cup}
\def \varleq{\leqslant}
\def \xto{\xrightarrow}
\def \img{{\rm img}}
\def \ind{{\rm ind}}
\begin{document}
\title{On the Topology of the Double Spherical Pendulum}

\author{Shengda Hu}
 \address{Department of Mathematics
Wilfrid Laurier University
75 University Avenue West,
Waterloo Ontario, Canada, N2L 3C5 }
\email{shu@wlu.ca }

 \author{Eduardo Leandro}
 \address{Universidade Federal de Pernambuco, Depto de Matematica, Av. Prof. Luiz Freire S/N, Recife, 50740-540, Brazil. }
 \email{eduardo@dmat.ufpe.br}

\author{Manuele Santoprete }
\address{Department of Mathematics
Wilfrid Laurier University
75 University Avenue West,
Waterloo Ontario, Canada, N2L 3C5 }
\email{msantopr@wlu.ca}

\begin{abstract}
This paper studies the topology of the constant energy surfaces of the double spherical pendulum. 
\end{abstract}
\keywords{Double Spherical Pendulum, Topology of Level Sets,  Morse Theory, Gysin Sequence}
\maketitle


\section{Introduction}
The double spherical pendulum was studied in \cite{Marsden}, where several dynamical features of the problem were brought to light using techniques of geometric mechanics and bifurcation theory with symmetry.


In this paper we consider  a different aspect of the problem:  we study the topology of the constant energy surfaces of the double spherical pendulum. 

The  topology of energy surfaces is important for understanding  Hamiltonian systems, and is strictly related to some dynamical features.

For instance, McCord and Meyer \cite{Mccord2000} found that the existence of global cross sections, a standard tool to extract global information about the flow, depends on the topology of the energy levels. An  exaustive study  of the existence of global cross sections for compact energy surfaces of two-degrees of freedom Hamiltonians can be found in \cite{Bolsinov}.
 
Moreover  some Hamiltonian systems, but not all,  are geodesic flows. McCord, Meyer and Offin \cite{Mccord2002} showed that there are some topological conditions that need to be satisfied for an Hamiltonian system to be a geodesic flow.

Another important problem is the study of the {\it integral manifolds } $I_w$, i.e. the common level sets of the integrals of the dynamical system \cite{SmaleI,SmaleII}. The integral manifolds have the property that if $v\in I_w$ then the orbit through $v$ is contained in $I_w$. 
In general such orbit can be very complicated, however, as suggested by Smale \cite{SmaleI,SmaleII}, a crude,  but very important, invariant of this orbit is the topological type of $I_w$. 
In the case of integrable Hamiltonian systems, the Arnol'd-Liouville theorem states that all non-singular compact, connected 
integral manifolds are tori. Consequently such tori constitute a (singular) foliation of the constant energy surfaces. 
The situation is not so simple  in the non-integrable case, where very little is known about the orbits, especially when there are several degrees of freedom.  
In such cases, understanding the topology of the energy levels seems to be a first step toward describing the integral manifolds of the system and how they foliate the constant energy surfaces. 

One of the goals of this paper is to give a better understanding of the topology of the energy surfaces of the double spherical pendulum. 


A second goal of the paper is to illustrate different techniques that can be used in studying the topology of energy surfaces. In our case we are able to give a complete description of the non-critical energy surfaces through Morse theory and algebraic topology. In particular, we show how to compute topological invariants such as the homology groups and the Betti numbers when an explicit description is too difficult to obtain.

A third goal is to explain the table with the Betti numbers given in \cite{Mccord2002} and to show that such table  is valid for a large open set of parameters describing the double spherical pendulum.

The paper is organized as follows. In the next section we describe the mechanical system and derive its Hamiltonian. In Section 3  we  give an explicit description of the non-critical energy surfaces. In the following section we compute the Betti numbers using the results of the preceding section. 
In  Section 5 we compute the Betti numbers without using the explicit description of the energy surfaces. In the last section we describe some dynamical consequences of the topology. 
\section{Set-up}
Consider the mechanical system consisting of two coupled spherical pendula in a gravitational field.
Let $r_1$ be the position vector  of  the first pendulum relative to its pivot $O$, and $r_2$ the position of the second pendulum with respect to the first. Let $(m_i, l_i)$, $i=1,2$, be the mass and the length of the $i$-th pendulum, respectively. Then we have

\begin{equation}\label{eq:multipendula}
\left\{\begin{matrix}
r_0 & = & 0,\\
h_0 & = & 0, \\
r_i & = & l_i q_i\\
q_i & = & \begin{pmatrix} x_i \\ y_i \\ z_i
\end{pmatrix}, \\
h_i & = & l_i  z_i + h_{i-1},\\
\end{matrix}\right. 
\end{equation}
where $q_i$ is the unit vector in the direction of $r_i$, and $h_i$ is the height of the $i$-th pendulum with respect to $O$. 
Thus the configuration space is $Q := S^2 \times S^2$. The Lagrangian $L(q, \dot q, t)$ is given by
$$L(q_1, q_2, \dot q_1, \dot q_2, t) := \frac{1}{2}(m_1 ||l_1\dot q_1||^2 + m_2 ||l_1\dot q_1 + l_2\dot q_2||^2) - V(q_1, q_2)$$
where 
$$V(q_1, q_2) = g(m_1h_1 + m_2h_2) = gm_2 l_2\left(k z_1 + z_2 \right)$$
and $k := \frac{(m_1+m_2)l_1}{m_2l_2}$ is called the \emph{slope} of the double spherical pendulum. The choice of the term \emph{slope} comes from the following presentation of the image of the set $\{V < c\}$ as the shaded area in Figure \ref{fig:slope}.

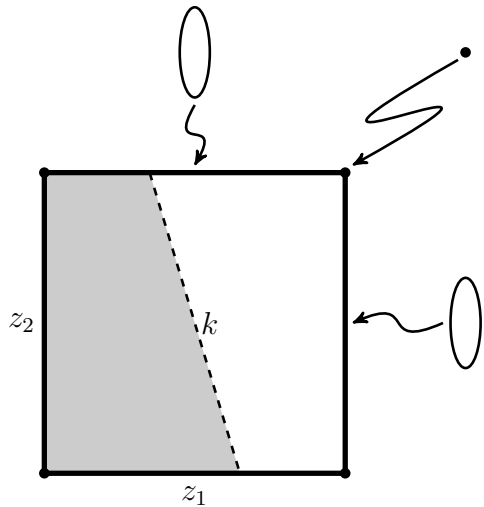
\begin{figure}[!h]
\begin{center}
\begin{tikzpicture}[scale=2,>=stealth']
\fill[color=gray!40!white] (-1, -1) -- (-1, 1) -- (-0.3, 1) -- (0.3, -1) -- cycle;
\draw[line width=2pt] (-1, -1) rectangle (1,1);
\fill (-1, -1) circle (1pt)
		(-1, 1) circle (1pt)
		(1, -1) circle (1pt)
		(1, 1) circle (1pt);
\draw[dashed,line width=1pt] (-0.3, 1) -- (0.3, -1);
\draw[line width=1pt] (0, 1.8) ellipse [x radius=0.1, y radius=0.3];
\draw[line width=1pt] (1.8, 0) ellipse [x radius=0.1, y radius=0.3];
\fill (1.8, 1.8) circle (1pt);
\draw [line width=1pt,->] (0,1.45) .. controls (-0.2, 1.1) and (0.2, 1.4) .. (0,1.05);
\draw [line width=1pt,->] (1.65, 0) .. controls (1.2, -0.2) and (1.5, 0.2) .. (1.05, 0);
\draw [line width=1pt,->] (1.75,1.75) .. controls (0, 0.75) and (2.75, 2) .. (1.05,1.05);
\draw (0.1,0) node {$k$}
      (-1.15,0) node {$z_2$}
      (0, -1.15) node {$z_1$};
\end{tikzpicture}
\caption{The image of the two height functions $z_1$ and $z_2$, with $k > 1$, and some of its pre-images in $S^2 \times S^2$.} \label{fig:slope}
\end{center}
\end{figure}


The conjugate momenta are given by
$$p_1 = \frac{\partial L}{\partial \dot q_1} = m_1 l_1^2\dot q_1 + m_2l_1(l_1\dot q_1 + l_2\dot q_2) \text{ and } p_2 = \frac{\partial L}{\partial \dot q_2} = m_2l_2 (l_1\dot q_1+ l_2\dot q_2).$$
From this, it follows that
$$\dot q_1 = \frac{1}{l_1}\left(\frac{p_1}{m_1l_1} - \frac{p_2}{m_1l_2}\right) \text{ and } \dot q_2 = \frac{1}{l_2}\left(\frac{p_2}{m_2l_2} - \frac{p_1}{m_1l_1} + \frac{p_2}{m_1l_2}\right)$$
and 
$$H(p_1, p_2, q_1, q_2) = \frac{1}{2}\left(\frac{1}{m_1}\left|\left|\frac{p_1}{l_1} - \frac{p_2}{l_2}\right|\right|^2 + \frac{1}{m_2}\left|\left|\frac{p_2}{l_2}\right|\right|^2\right) + V(q_1, q_2).$$
Notice that the Hamiltonian flow must satisfy the following four constraints:
\[g_1: ~q_1\cdot q_1=1, ~ g_2:~ q_2\cdot q_2=1,~ g_3:~ q_1\cdot \dot q_1=q_1\cdot\left(\frac{p_1}{m_1l_1^2} - \frac{p_2}{m_1l_1l_2}\right) =0,\]
and
\[
g_4:~q_2\cdot \dot q_2=q_2\cdot  \left(\frac{p_2}{m_2l_2^2} - \frac{p_1}{m_1l_1l_2} + \frac{p_2}{m_1l_2^2}\right)=0.
\]
The phase space of the system is $M := T^*Q = T^*(S^2 \times S^2)$ and $H$ defines a function on $T^*Q$.
\section{Topology of energy surfaces}
In this section, we will first apply Morse theory to get some information on the energy surfaces. Next, we describe these surfaces explicitly.
\subsection{Description using Morse theory}
\begin{prop}\label{prop:MorseFunc}
$V(q_1, q_2)$ is a Morse function on the configuration space $Q$.
\end{prop}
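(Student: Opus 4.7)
The plan is to exploit the product structure of both the configuration space and the potential. Since $V = gm_2 l_2 (k z_1 + z_2)$ is a sum of a function depending only on $q_1$ and a function depending only on $q_2$, and since $Q = S^2 \times S^2$ is a product, the statement reduces to the well-known fact that the height function on $S^2$ is Morse. Explicitly, write $V = \phi \circ \pi_1 + \psi \circ \pi_2$ with $\phi := g m_2 l_2 k \cdot z$ and $\psi := gm_2l_2 \cdot z$, where $z : S^2 \to \RR$ is the height function and $\pi_i : Q \to S^2$ is the $i$-th projection.

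First I would observe that at any point $(q_1, q_2) \in Q$, choosing local coordinates on each factor, the differential splits as $dV_{(q_1,q_2)} = d\phi_{q_1} \oplus d\psi_{q_2}$, so $(q_1, q_2)$ is critical for $V$ if and only if $q_1$ is critical for $\phi$ and $q_2$ is critical for $\psi$. The Hessian at such a critical point is block diagonal,
$$
\mathrm{Hess}(V)_{(q_1,q_2)} = \begin{pmatrix} \mathrm{Hess}(\phi)_{q_1} & 0 \\ 0 & \mathrm{Hess}(\psi)_{q_2} \end{pmatrix},
$$
so non-degeneracy of $V$ reduces to non-degeneracy of each block.

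Next I would verify directly that $z$ is a Morse function on $S^2$. Using the standard chart $(x,y) \mapsto (x,y,\pm\sqrt{1-x^2-y^2})$ near each pole, one has $z \approx \pm 1 \mp \tfrac12(x^2+y^2)$, so $dz = 0$ precisely at the two poles $N = (0,0,1)$ and $S = (0,0,-1)$, and the Hessian in these coordinates is $\mp I_2$, hence non-degenerate. Multiplying by the non-zero constants $gm_2 l_2 k$ and $gm_2 l_2$ (note $k > 0$ since all masses and lengths are positive) preserves non-degeneracy.

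Combining these observations, $V$ has exactly four critical points, namely the pairs $(q_1, q_2) \in \{N, S\} \times \{N, S\}$, and the Hessian at each is non-degenerate of signature dictated by the signs above (yielding Morse indices $0, 2, 2, 4$). There is no real obstacle here; the only thing to keep an eye on is the non-vanishing of the coefficients $gm_2 l_2 k$ and $gm_2 l_2$, which is automatic from the physical assumptions on the parameters.
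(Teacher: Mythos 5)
Your proposal is correct and takes essentially the same approach as the paper: the paper notes that $z_i$ is a Morse height function on each $S^2$ factor and that $kz_1 + z_2$ is therefore Morse on $S^2 \times S^2$ when $k \neq 0$, which is precisely the product-structure argument you spell out in detail with the block-diagonal Hessian.
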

\begin{proof}
We note that $z_i$ is the height function on the $i$-th $S^2$ which is evidently a Morse function. It follows that $kz_1 + z_2$ is necessarily a Morse function on $S^2 \times S^2$ whenever $k \neq 0$. Thus $V$ is a Morse function on $Q$.
\end{proof}

We can write down the $4$ critical points of $V$ as follows. Let $0$ and $\infty$ denote the lowest and highest point of $S^2$ (in terms of the height function), then the critical points of $V$ is given by
$$P_1 = (0, 0), \ \ P_2 = (0, \infty), P_3 = (\infty, 0)\ \  \text{ and } \ \ P_4 = (\infty, \infty).$$
The Morse index of $V$ at $P_i$ is given by the following:
$$\ind_Q P_1 = 0, \ \ \ind_Q P_2 = \ind_Q P_3 = 2 \ \  \text{ and } \ \ \ind_Q P_4 = 4.$$
The slope determines the relative position of $P_2$ and $P_3$:
$$k > 1 \iff V(P_2) < V(P_3) \ \ \text{ and }\ \  k = 1 \iff V(P_2) = V(P_3).$$

\begin{corollary}\label{cor:MorseFunc}
$H$ is a Morse function on the phase space $M = T^*Q$.
\end{corollary}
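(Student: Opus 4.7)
The plan is to exploit that $H$ is a standard mechanical Hamiltonian of the form kinetic plus potential: $H = K + \pi^*V$, where $K$ is a positive-definite quadratic form on each fiber of $T^*Q \to Q$ and $V$ is pulled back from the base, with $V$ a Morse function by Proposition \ref{prop:MorseFunc}. The task reduces to the standard decomposition argument from geometric mechanics.

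First I would identify the critical points of $H$. Writing $dH = dK + \pi^* dV$ in any bundle-adapted coordinates $(q,p)$ on $T^*Q$, the fiber derivatives of $H$ coincide with those of $K$ and vanish exactly on the zero section because $K$ is positive-definite quadratic in $p$. On the zero section, the base derivatives of $K$ are linear in $p$ and hence vanish, so the base derivatives of $H$ reduce to $dV$. Therefore the critical set of $H$ is exactly the set of zero covectors above critical points of $V$, namely the four points $(P_i, 0)$ for $i = 1, 2, 3, 4$.

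Next I would analyze the Hessian at each $(P_i, 0)$. In local coordinates on $T^*Q$ adapted to the bundle structure (base coordinates first, then fiber coordinates), the mixed second partials of $K$ are linear in $p$ and vanish at $p = 0$, so the Hessian is block diagonal:
$$\text{Hess}_{(P_i, 0)} H = \begin{pmatrix} \text{Hess}_{P_i} V & 0 \\ 0 & G^{-1}(P_i) \end{pmatrix}.$$
The fiber block $G^{-1}(P_i)$ is the dual metric and therefore positive definite, and the base block is non-degenerate because $V$ is Morse. Consequently the full Hessian is non-degenerate at every critical point, so $H$ is a Morse function on $M = T^*Q$. As a bonus, since the fiber block contributes index zero, the Morse index of $H$ at $(P_i, 0)$ equals $\ind_Q P_i$; in particular the indices of $H$ are $0$, $2$, $2$, $4$.

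The only subtle point is that the Hamiltonian is written in ambient $\RR^3 \oplus \RR^3$ momenta subject to the constraints $g_1, \ldots, g_4$, so one should check that the kinetic form remains positive definite after restriction to the intrinsic $T^*Q$. This is immediate: the kinetic form is dual to the genuine Riemannian metric $m_1\|l_1\dot q_1\|^2 + m_2\|l_1\dot q_1 + l_2\dot q_2\|^2$ on $TQ$, and the dual of a positive-definite metric is positive definite. I expect this bookkeeping of the constraints to be the only mildly delicate step; everything else is a routine verification for a natural mechanical system.
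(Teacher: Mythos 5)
Your proposal is correct and follows essentially the same strategy as the paper: identify the critical points as the zero covectors over critical points of $V$, and observe that the fiberwise positive-definiteness of the kinetic form together with the Morse property of $V$ yields non-degeneracy of the Hessian of $H$. You spell out the block-diagonal structure of the Hessian (and the vanishing of the mixed $q$--$p$ second partials at $p=0$) somewhat more explicitly than the paper's terse statement that ``$H-V$ is a non-degenerate quadratic form on the fibers,'' but the underlying decomposition argument is identical.
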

\begin{proof}
Let $P = (p, q) \in T^*Q$ be a critical point of $H$. From the explicit expression, we see that $q \in Q$ must be a critical point of $V$, which is  a Morse function on $Q$. Thus $P$ is non-degenerate along $Q$. Since $H - V$ is a non-degenerate quadratic form defined on the fibers of $T^*Q \to Q$ (independent of $q \in Q$), we see that $P$ is non-degenerate in $T^*Q$. It follows that $H$ is a Morse function on $M$.
\end{proof}

From the above analysis, we also see that there are precisely $4$ critical points of $H$, of the form $(0, q)$, where $q$ is a critical point of $V$ on $Q$. Thus we will again denote them $P_i$ with $i = 1, 2, 3, 4$ as above. Since $H-V$ is positive definite on each fiber, we have
$$\ind_M P_i = \ind_Q P_i.$$
According to Morse theory, the topology of level sets of $H$ change when passing a critical value by attaching handles, which are determined by the Morse index at the corresponding critical points. Let $M(a) := H^{-1}(a)$. We note that for small $\varepsilon$ we have $M(\varepsilon) \simeq S^7$, as $P_1$ is an isolated minimum. For $\rho >> 1$, we have $M(\rho) \simeq T_1^*Q \simeq T_1 Q$ the unit (co)tangent bundle of $Q$, since $H^{-1}((-\infty, \rho))$ is now a tubular neighbourhood of the $0$-section. The interesting critical points $P_2$ and $P_3$  both have index $2$. We have

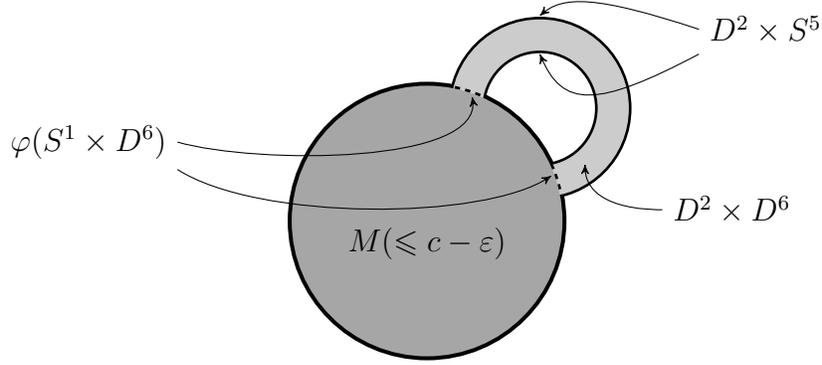
\begin{figure}[!t]
\begin{center}
\begin{tikzpicture}[scale=1.5,>=stealth']
\draw[line width=3pt] (-0.5, -0.5) circle (1.2);
\fill[even odd rule,color=gray!40!white] (0.5, 0.5) circle (0.5)
                                         (0.5, 0.5) circle (0.8);
\draw[dotted,line width=2pt] (0.7, -0.5) arc (0:80:1.2);
\draw[line width=1pt] (0.5, 0.5) circle (0.5)
                      (0.5, 0.5) circle (0.8);
\fill[color=gray!70!white] (-0.5, -0.5) circle (1.2);
\draw (-0.5, -0.7) node {$M(\varleq c - \varepsilon)$}
      (2.5, 1.2) node[name=handle_border] {$D^2 \times S^5$}
      (2.2, -0.4) node[name=handle_body] {$D^2 \times D^6$}
      (-3.5, 0.2) node[name=handle_hidden] {$\varphi(S^1 \times D^6)$};
\draw [->] (handle_body.west) .. controls +(left:0.1) and +(down:0.4) .. (0.9,0);
\draw [->] (handle_border.west) .. controls (1, 1.5) and (0.7, 1.5) .. (0.5, 1.32);
\draw [->] (handle_border.south west) .. controls (1, 0.5) and (0.7, 0.5) .. (0.5, 0.98);
\draw [->] (handle_hidden.east) .. controls (-2, 0) and (0, 0) .. (-0.1, 0.6);
\draw [->] (handle_hidden.south east) .. controls (-2, -0.5) and (0, -0.5) .. (0.6, -0.1);
\end{tikzpicture}
\caption{The solid lines represent $(M(c - \varepsilon) \setminus \varphi (S^1 \times D^6)) \union_\partial (D^2 \times S^5)$. The dashed lines represents $ \varphi (S^1 \times D^6) $. \label{fig:morse} }
\end{center}
\end{figure}

\begin{prop}
Let $c = H(P_2)$ or $H(P_3)$ and $\varepsilon > 0$ small, then
\begin{equation}\label{eq:morse}
M(c+ \varepsilon) \cong (M(c - \varepsilon) \setminus (S^1 \times D^6)) \union_\partial (D^2 \times S^5),
\end{equation}
where $\union_\partial$ denotes gluing along the boundary $S^1 \times S^5$. 
\end{prop}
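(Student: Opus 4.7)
The statement is essentially the effect on level sets of attaching a single Morse handle. The ambient phase space has $\dim M = \dim T^*(S^2 \times S^2) = 8$, so the level sets $M(a) = H^{-1}(a)$ are $7$-dimensional. Since Corollary 3.2 shows $H$ is Morse, and since $P_2$ and $P_3$ each have index $2$, passing through the critical value $c$ should attach a $2$-handle $D^2 \times D^{8-2} = D^2 \times D^6$ to the sublevel set. My plan is to deduce the claimed surgery description on the $7$-dimensional level set by taking the boundary of this handle attachment.

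First, I would apply the standard handle attachment theorem of Morse theory (e.g.\ Milnor's \emph{Morse Theory}, Theorem 3.2). Assuming $c$ is a critical value with a single critical point $P_i$ ($i = 2$ or $3$) of index $2$ on the level, and $\varepsilon > 0$ is small enough that $[c-\varepsilon, c+\varepsilon]$ contains no other critical value, we obtain a diffeomorphism
\begin{equation*}
H^{-1}((-\infty, c+\varepsilon]) \;\cong\; H^{-1}((-\infty, c-\varepsilon]) \cup_{\varphi} (D^2 \times D^6),
\end{equation*}
where the attaching map $\varphi : S^1 \times D^6 \hookrightarrow M(c-\varepsilon)$ is built from the stable/unstable manifolds of the gradient flow at $P_i$ with respect to an auxiliary Riemannian metric (its image is a tubular neighbourhood of the descending $S^1$ in $M(c-\varepsilon)$).

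Second, I would take boundaries of both sides. Using
\begin{equation*}
\partial(D^2 \times D^6) = (S^1 \times D^6) \cup_{S^1 \times S^5} (D^2 \times S^5),
\end{equation*}
the $S^1 \times D^6$ part is identified by $\varphi$ with a piece of $M(c-\varepsilon)$, so when forming the boundary of the glued space that piece is removed and replaced by $D^2 \times S^5$. Thus
\begin{equation*}
M(c+\varepsilon) \;\cong\; \bigl(M(c-\varepsilon) \setminus \varphi(S^1 \times \mathrm{int}\, D^6)\bigr) \cup_{S^1 \times S^5} (D^2 \times S^5),
\end{equation*}
which is exactly \eqref{eq:morse} (modulo the harmless identification of the open and closed tubular neighbourhoods, which differ only by a collar).

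There is really no deep obstacle here, only two points that merit care. The first is ensuring we pass through only one critical point at a time: this is automatic when $k \neq 1$, since then $H(P_2) \neq H(P_3)$ by the last displayed equivalence before the corollary; the borderline case $k=1$ would require a separate argument (or a small perturbation) to isolate the two handles. The second is being explicit that the embedded $S^1 \times D^6 \subset M(c-\varepsilon)$ appearing in the statement is the image of the Morse-theoretic attaching map $\varphi$, so that the gluing on the right is well-defined and independent of choices up to isotopy.
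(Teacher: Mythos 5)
Your proof is correct and follows essentially the same route as the paper: apply the Morse handle-attachment theorem to the sublevel sets $M(\varleq a)$, then take boundaries to convert the handle attachment into the claimed surgery on the level set $M(a)$. The remarks you add about isolating a single critical value (requiring $k \neq 1$) and about identifying $S^1 \times D^6$ with the image of the attaching map $\varphi$ are sensible clarifications, but the core argument is identical to the paper's.
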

\begin{proof}
For $a \in \RR$, let $M(\varleq a) = H^{-1}((-\infty, a))$. Then Morse theory describes the topological change of $M(\varleq a)$ when passing the critical level $c$ as the attaching of a handle:
$$M(\varleq c+ \varepsilon) \cong M(\varleq c - \varepsilon) \union_\varphi (D^2 \times D^6),$$
where $\partial(D^2 \times D^6) = (S^1 \times D^6) \union (D^2 \times S^5)$ and $\varphi : S^1 \times D^6 \to \partial M(\varleq c - \varepsilon)$ is an embedding. This also provides a description of topological change of the level sets $M(a)$, since
$$M(a) = \partial M(\varleq a).$$
Thus
\[\begin{split}M(c+\varepsilon) = \partial M(\varleq c+ \varepsilon) \cong \partial \left(M(\varleq c - \varepsilon) \union_\varphi (D^2 \times D^6)\right)\\ 
\cong (M(c - \varepsilon) \setminus \varphi (S^1 \times D^6)) \union_\partial (D^2 \times S^5)
\end{split}
\] (see Figure \ref{fig:morse}).
Since $\varphi$ is an embedding, the result follows.
\end{proof}

\noindent
We write for a small $\varepsilon > 0$:
$$M_i := M(H(P_i) + \varepsilon) \text{ for } i = 1, 2, 3, 4.$$
Then $M_1 \cong S^7$ and $M_4 \cong T_1^*Q$. 

We now give a more explicit description of $M_2$ and $M_3$ using equation (\ref{eq:morse}).

Note that 
$$S^7 = \partial D^8 \simeq \partial (D^2 \times D^6) = (S^1 \times D^6) \union_\partial (D^2 \times S^5).$$
In this case, since $S^7$ is simply connected, the surgery giving rise to $M_2$ depends only on the gluing along the boundary, and is not affected by where it is performed, thus
$$M_2 \cong ((S^1 \times D^6) \union_\partial (D^2 \times S^5)\setminus (S^1 \times D^6)) \union_\partial (D^2 \times S^5)\cong(D^2 \times S^5) \union_\partial (D^2 \times S^5).$$

\begin{figure}[!t]
\begin{center}
\begin{tikzpicture}[scale=1.5,>=stealth']
\draw[line width=1pt,rounded corners=12pt] (-1.45, 0) --  (-1.15, 0.75) -- (-0.5, 1) -- (0.15, 0.75) -- (0.8,0.8) -- (1.25, 1) -- (1.7,0.8) -- (2, 0.25) -- (1.7, -0.3) -- (1.25, -0.5) -- (0.8, -0.3) -- (0.15, -0.75) -- (-0.5, -1) -- (-1.15, -0.75)-- cycle;
\draw (-0.5, 0.95) arc (90:270:0.25 and 0.95); 
\draw[dashed] (-0.5, -0.95) arc (-90:90:0.25 and 0.95);
\fill[even odd rule,color=gray!40!white] (1.25, 0.15) circle (0.15)
                                         (1.25, 0.15) circle (0.3);
\draw[dashed,line width=1pt] (1.25, 0.15) circle (0.15)
                             (1.25, 0.15) circle (0.3);
\draw (2.25, 0) node {$\cong$};
\draw[line width=1pt,rounded corners=12pt,xshift=4cm] (-1.45, 0) --  (-1.15, 0.75) -- (-0.5, 1) -- (0.15, 0.75) -- (0.7, 0.3) -- (2.25, 0.3) -- (2.8,0.8) -- (3.25, 1) -- (3.7,0.8) -- (4, 0.25) -- (3.7, -0.3) -- (3.25, -0.5) -- (2.8, -0.3) -- (2.25, 0.1) -- (0.7, 0.1) -- (0.15, -0.75) -- (-0.5, -1) -- (-1.15, -0.75)-- cycle;
\draw[xshift=4cm]  (-0.5, 0.95) arc (90:270:0.25 and 0.95);
\draw[dashed,xshift=4cm] (-0.5, -0.95) arc (-90:90:0.25 and 0.95);
\draw (5.5, 0.3) arc (90:270:0.05 and 0.1);
\draw[dashed] (5.5, 0.1) arc (-90:90:0.05 and 0.1);
\draw[dashed] (7.25, -0.44) arc (-90:90:0.2 and 0.68);
\fill[even odd rule,color=gray!40!white,xshift=4.2cm] (3.25, 0.15) circle (0.15)
                                                    (3.25, 0.15) circle (0.3);
\draw[dashed,line width=1pt,xshift=4.2cm] (3.25, 0.15) circle (0.15)
                                        (3.25, 0.15) circle (0.3);
\draw (7.25, 0.92) arc (90:270:0.2 and 0.68);
\draw (0.5, 1.2) node {$M_2$}
      (3.5, 1.2) node {$M_2$}
      (5.5, 1.2) node {$\#$}
      (7.25, 1.2) node {$S^7$}
      (3.5, -2) node[name=loci] {$S^1 \times D^6$}
      (6, -1) node[name=neck] {$S^6$};
\draw[->] (loci.west) .. controls (2, -2) and (1.5, -1) .. (1.4, 0);
\draw[->] (loci.east) .. controls (6, -2) and (7, -1) .. (7.35, -0.05);
\draw[->] (neck.north) -- (5.55, 0.2);
\end{tikzpicture}
\caption{\label{fig:surgery} }
\end{center}
\end{figure}
%
%

Similarly, since one can show that $M_2$ is simply connected, the surgery giving rise to $M_3$ again depends only on the gluing along the boundary. Moreover, we may describe this surgery as a connected sum. Note that $M_2 \simeq M_2 \# S^7$ and $S^1 \times D^6$ can isotopied 
into $S^7$ (see Figure \ref{fig:surgery}) and the surgery on $S^7$ as above gives $M_3 \simeq M_2 \# N_2$ with
$$N_2 \cong (D^2 \times S^5) \union'_\partial (D^2 \times S^5)$$
where $\union'_\partial$ means a possibly different gluing from the one used for $M_2$. One needs to determine the attaching maps between the $D^2\times S^5$'s in order to obtain $M_2$ and $M_3$.

\subsection{Explicit construction}
 We first show $M_1 \cong S^7$ in a more explicit fashion. Without any loss of generality, we assume from now on $k > 1$.  The accessible region in this case is $U_1 = \pi(M_1) \subset Q$, where $\pi : T^*Q \to Q$ is the bundle projection. Using a diagram as in Figure \ref{fig:slope}, we may represent $U_1 \cong D^4$ as in Figure \ref{fig:firstlevel}:\\
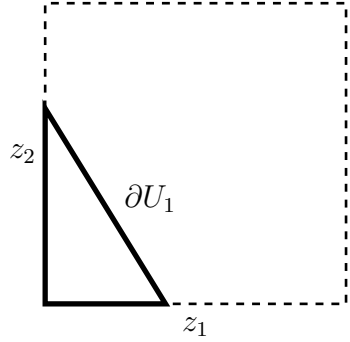
\begin{figure}[!h]
\begin{center}
\begin{tikzpicture}[scale=2]
\draw[line width=2pt] (-1, -1) -- (-1, 0.3) -- (-0.2,-1) -- cycle;
\draw[dashed,line width=1pt] (-1, 0.3) -- (-1, 1) -- (1, 1) -- (1, -1) -- (-0.2, -1);
\draw (-0.3,-0.3) node {$\partial U_1$}
      (-1.15,0) node {$z_2$}
      (0, -1.15) node {$z_1$};
\end{tikzpicture}
\caption{$U_1 \subset S^2 \times S^2$, with $k > 1$\label{fig:firstlevel}}
\end{center}
\end{figure}

\noindent
From the fact that the kinetic energy $H-V$ is positive definite, it follows that $M_1 \cong T_1U_1 / \sim_\partial$, where $T_1$ denotes the unit (co)tangent bundle and $\sim_\partial$ denote the equivalence relation that identifies all the points along each fiber over points in $\partial U_1$. Here, the fibers are all homeomorphic to $S^3$. Since $U_1$ is contractible, we see that $T_1U_1 \cong D^4 \times S^3$. We define explicitly a map $D^4 \times S^3 \to S^7$ which identifies all the points along each fiber over points in $\partial D^4$. Let $D^4$ be the unit ball in $\RR^4$ and $S^3$ the unit sphere in $\RR^4$. We take $S^7$ also as a unit sphere in $\RR^8$, then the following is the desired map
\[\begin{split}D^4 \times S^3 \to S^7 : ((x_4, x_5, x_6, x_7), (x_0, x_1, x_2, x_3)) \mapsto \\
\left(\sqrt{1-\sum_{i = 4}^7x_i^2}\ (x_0, x_1, x_2, x_3), (x_4, x_5, x_6, x_7)\right).\end{split}\]

\begin{prop}\label{prop:levelset2}
$M_2 \cong S^2 \times S^5$.
\end{prop}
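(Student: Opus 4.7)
The plan is to realize $M_2$ as a trivial sphere bundle over $S^2$ by first determining the topology of the accessible region $U_2 := \pi(M_2) \subset Q$, and then exploiting the stable triviality of the tangent bundle of $S^2$.

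First I would analyze $U_2 = \{(q_1, q_2) \in S^2 \times S^2 : k z_1 + z_2 \leq c\}$ with $c \in (1-k, k-1)$, via the projection $\pi_2 : U_2 \to S^2$, $(q_1, q_2) \mapsto q_2$. For each $q_2 \in S^2$, the fiber $\pi_2^{-1}(q_2) = \{q_1 \in S^2 : z_1(q_1) \leq (c - z_2(q_2))/k\}$ is a closed spherical cap whose bounding latitude lies strictly between the two poles (since both $(c \pm 1)/k \in (-1, 1)$). Thus $U_2 \to S^2$ is a $D^2$-bundle. Any constant section $q_2 \mapsto (q_1^0, q_2)$ with $z_1(q_1^0) < (c-1)/k$ lies in every fiber, and two such sections with distinct choices of $q_1^0$ are disjoint, so the self-intersection is zero and $U_2 \cong S^2 \times D^2$.

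Next I would describe $M_2$ as a sphere bundle over $U_2$. Since the kinetic energy $K$ is positive definite on each cotangent fiber, $M_2 = H^{-1}(H(P_2) + \varepsilon)$ fibers over $U_2$ with fiber the $K$-sphere of squared radius $2(H(P_2) + \varepsilon - V(q))$; this is an $S^3$ over the interior of $U_2$ and degenerates to a point along $\partial U_2$. Up to homeomorphism, $M_2 \cong T_1 U_2 / \sim_\partial$ with fibers over $\partial U_2$ collapsed. Because $TS^2 \oplus \underline{\RR} \cong \underline{\RR}^3$ (the normal bundle of $S^2 \subset \RR^3$ is trivial), $T(S^2 \times D^2) \cong \pi_1^* TS^2 \oplus \underline{\RR}^2$ is trivial of rank $4$, hence $T_1 U_2 \cong S^2 \times D^2 \times S^3$.

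Finally, collapsing the $S^3$-fibers over $\partial U_2 = S^2 \times \partial D^2$ only affects the $D^2 \times S^3$ factor, giving
\[
M_2 \cong S^2 \times \left((D^2 \times S^3)/\sim\right),
\]
where $(w, v) \sim (w, v')$ whenever $w \in \partial D^2$. The map $[w, v] \mapsto (w, \sqrt{1 - \|w\|^2}\, v) \in S^5 \subset \RR^2 \oplus \RR^4$ is easily checked to be a homeomorphism onto $S^5$, yielding $M_2 \cong S^2 \times S^5$. I expect the main obstacle to be justifying the two triviality claims, namely that $U_2$ is the trivial rather than a twisted $D^2$-bundle over $S^2$, and that $T_1 U_2$ is a trivial $S^3$-bundle; both reduce to vanishing Euler numbers coming from the stable triviality of $TS^2$, and the key computational check is exhibiting the disjoint constant sections of $U_2 \to S^2$.
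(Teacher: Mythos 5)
Your proposal is correct and follows essentially the same route as the paper's proof: identify $U_2 \cong S^2 \times D^2$ via the projection to the second sphere factor, establish that $T U_2$ is trivial of rank $4$, realize $M_2 \cong T_1 U_2/\sim_\partial \cong (S^2 \times D^2 \times S^3)/\sim_\partial$, and collapse the $S^3$ fibers over $\partial U_2$ through the same explicit map $(w,v) \mapsto \bigl(w, \sqrt{1-\|w\|^2}\,v\bigr)$ to produce $S^2 \times S^5$. The only (minor) divergences are expository: you justify triviality of the $D^2$-bundle $U_2 \to S^2$ by exhibiting disjoint constant sections (vanishing Euler number), where the paper simply asserts $U_2 \cong D^2 \times S^2$ from the fibers being discs, and you deduce triviality of $T U_2$ from the stable triviality $TS^2 \oplus \RR \cong \RR^3$ rather than the paper's embedding of $S^2\times D^2$ into $\RR^4$ — but these are two packagings of the same fact about the normal bundle of $S^2 \subset \RR^3$.
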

\begin{proof}
Let $U_2 := \pi(M_2) \subset Q$ be the accessible region. Then 
$$U_2 = \{(q_1, q_2) | gm_2l_2(kz_1 + z_2) + V_0 \varleq V(P_2) + \varepsilon\}.$$
Recall that $M_2 \cong T_1U_2 / \sim_\partial$.
%
Let $\pi_i : Q \to S^2$ be the projection to the $i$-th factor. Since $k > 1$, we see that $\pi_2|_{U_2} : U_2 \to S^2$ is surjective, and the fiber of $\pi_2|_{U_2}$ at each $q_2 \in S^2$ is homeomorphic to a disc. Thus we have $U_2 \cong D^2 \times S^2$. It follows that $M_2 \cong T_1(D^2 \times S^2)/\sim_\partial$.

Notice that $S^2 \times D^2$ can be embedded into $\RR^4$ as follows. From $S^2 \times \RR \cong \RR^3\setminus\{(0,0,0)\}$ we see that $S^2 \times \RR^2 \cong \RR^4\setminus\{(0,0,0, w) | w \in \RR\}$. Then $S^2 \times D^2 \subset S^2 \times \RR^2$ is embedded into $\RR^4$. It follows that $T(D^2 \times S^2)$ is trivial, i.e.
 $$T(D^2 \times S^2) \cong D^2 \times S^2 \times \RR^4.$$

Thus, we have
$$M_2 \cong  T_1(D^2 \times S^2)/\sim_\partial\cong (D^2 \times S^2 \times S^3) /\sim_\partial.$$
We see $M_2 \cong S^2 \times S^5$ as follows. Let $S^5$ be the unit sphere in $\RR^6$,  given by $\sum_{i = 0}^5 x_i^2 = 1$, $S^3$ be unit sphere in $\RR^4$ given by $\sum_{i = 0}^3 x_i^2 = 1$ and $D^2$ as the unit disk in $\RR^2$ given by $x_4^2 + x_5^2 \varleq 1$. Then we have the map
$$D^2 \times S^3 \to S^5 : ((x_4, x_5), (x_0, x_1, x_2, x_3)) \mapsto (\sqrt{1-x_4^2 - x_5^2}\ (x_0, x_1, x_2, x_3), (x_4, x_5)).$$
The induced map $D^2 \times S^2 \times S^3 \to S^2 \times S^5$ is precisely the quotient $\sim_\partial$.
\end{proof}


Let's relate the explicit construction in the proof to the description using Morse theory. Let $\partial'(D^2 \times D^2) = S^1 \times D^2 \subset \partial(D^2 \times D^2)$ and notice that
\begin{equation}\label{eq:s2decomp}
U_2 \cong D^2 \times S^2 \cong (D^2 \times D^2) \union_{\partial'} (D^2 \times D^2),
\end{equation}
where $\union_{\partial'}$ means that the two copies of $D^2 \times D^2$ are glued along $\partial'(D^2 \times D^2)$. A similar map as used in the proof above shows that
$$T_1(D^2 \times D^2) / \sim_{\partial'} \cong D^2 \times D^2 \times S^3/ \sim_{\partial'} \cong D^2\times S^5,$$
where $\sim_{\partial'}$ is the equivalence relation that identifies all the points along each fiber over points in $S^1 \times D^2 = \partial' (D^2 \times D^2)$. Thus it follows from \eqref{eq:s2decomp} that
$$M_2 \cong (D^2 \times S^5) \union_\partial (D^2 \times S^5).$$
Let $\varphi$ denote the attaching map $S^1 \times S^5 \to S^1 \times S^5$. We present this in Figure 
\ref{fig:secondlevel}:
\begin{figure}[!h]
\begin{center}
\begin{tikzpicture}[scale=2]
\draw[line width=2pt] (-2.5, -1) -- (-2.5,1) -- (-1.8, 1) -- (-1.2, -1) -- cycle;
\draw[dashed,line width=1pt] (-1.8, 1) -- (-0.5,1) -- (-0.5,-1) -- (-1.2, -1);
\fill[color=gray!40!white] (0.5, 0) -- (0.5, 1) -- (1.5, 1) -- (1.5, 0) -- cycle;
\draw[line width=2pt] (0.5, -1) -- (0.5,1) -- (1.5, 1) -- (1.5, -1) -- cycle;
\draw[dashed,line width=1pt] (1.5, 1) -- (2.5,1) -- (2.5,-1) -- (1.5, -1);
\draw[dashed,line width=1pt] (0.45, 0) -- (1.55, 0);
\draw (-1.3,0) node {$\partial U_2$}
      (-2.65,0) node {$z_2$}
      (-1.5, -1.15) node {$z_1$}
      (0,0) node {$\cong$}
      (1, -1.15) node {$D^2 \times S^2$}
      (1.65, 0) node {$\partial$};
\end{tikzpicture}
\caption{$U_2 \subset S^2 \times S^2$, with $k > 1$\label{fig:secondlevel}}
\end{center}
\end{figure}
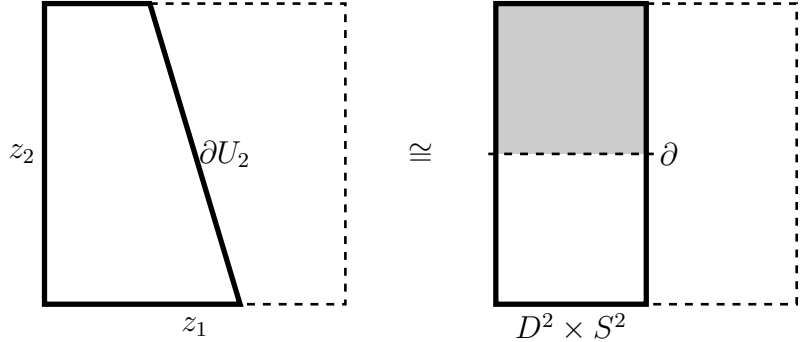
\noindent \\
\noindent
In the right figure, the edge labeled by $\partial$ represents the boundary $S^1 \times S^2$. The points in the fiber of $\pi$ over points in the boundary are identified. The dashed line corresponds to $D^2 \times S^1$. The shaded and unshaded part each corresponds to $D^2 \times D^2 \subset D^2 \times S^2$.

\begin{prop}\label{prop:levelset3}
$M_3 \cong (S^2 \times S^5)^{\#2}$.
\end{prop}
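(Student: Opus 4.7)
The plan is to continue the Morse-theoretic sketch laid out in the paragraph immediately preceding the proposition statement, and turn it into a proof by explicitly identifying the ``defect'' piece of the connected sum. Applying (\ref{eq:morse}) at the critical value $c = H(P_3)$, together with the identification $M_2 \cong S^2 \times S^5$ from Proposition \ref{prop:levelset2}, we have
\[
M_3 \cong \bigl(M_2 \setminus \varphi(S^1 \times D^6)\bigr) \cup_\partial (D^2 \times S^5),
\]
for some framed embedding $\varphi : S^1 \times D^6 \hookrightarrow M_2$. Since $M_2 \cong S^2 \times S^5$ is simply connected, the core circle $\varphi(S^1 \times \{0\})$ is null-homotopic, and its tubular neighborhood can be isotoped inside an embedded $7$-ball $D^7 \subset M_2$.

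Localizing the surgery inside this ball decomposes $M_3$ as a connected sum
\[
M_3 \cong M_2 \mathbin{\#} N_2, \qquad N_2 := \bigl(S^7 \setminus \varphi(S^1 \times D^6)\bigr) \cup_\partial (D^2 \times S^5),
\]
where $\varphi$ is now viewed as a framed embedding into $S^7$; this is precisely the content of Figure \ref{fig:surgery}. The crucial observation is that this is exactly the same surgery on $S^7$ that was used to produce $M_2$ in the paragraph preceding Proposition \ref{prop:levelset2}. Reusing the explicit trivialization of $T_1(D^2 \times S^2)$ and the quotient map constructed there yields $N_2 \cong S^2 \times S^5$, and hence
\[
M_3 \cong (S^2 \times S^5) \mathbin{\#} (S^2 \times S^5) = (S^2 \times S^5)^{\#2}.
\]

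The point requiring the most care is that the framing of $\varphi$ inside the isolating ball agrees with the standard framing used for $M_2$: a priori, the nontrivial element of $\pi_1(SO(6)) = \ZZ/2$ could twist the surgery so that $N_2$ becomes the nontrivial $S^5$-bundle over $S^2$ rather than $S^2 \times S^5$. I would rule this out by tracking framings through the Morse handle decomposition, using that the descending manifold of the index-$2$ critical point $P_3$ inherits a canonical trivialization from the kinetic-energy splitting on $T^*Q$. As a sanity check, one can cross-validate by the alternative ``explicit'' route used for $M_2$: the accessible region for $M_3$ is $U_3 \cong (S^2 \times S^2) \setminus D^4$, which admits a decomposition into two pieces of the form $D^2 \times S^2$ glued along their boundaries, and tracing through the $\sim_\partial$ identification on $T_1 U_3$ reads off the double connected sum directly.
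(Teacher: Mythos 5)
Your outline follows the Morse-theoretic sketch in the paragraph preceding the proposition, and you correctly isolate the key technical point: since $\pi_1(SO(6)) \cong \ZZ/2$, a surgery along a null-homotopic circle in $S^7$ could a priori produce either $S^2 \times S^5$ or the nontrivial $S^5$-bundle over $S^2$, so the framing of the attaching circle must be pinned down. But the step you flag as ``the point requiring the most care'' is exactly the one you leave unproved. The observation that the descending disc of $P_3$ inherits a trivialization from the kinetic-energy splitting of $T(T^*Q)$ does not by itself determine the relevant class in $\pi_1(SO(6))$: that class compares the Morse framing of the attaching circle to the standard framing after the circle has been isotoped into a ball $D^7 \subset M_2$, and it depends on the global embedding of the descending manifold in $M(c - \varepsilon)$, not only on local data at the critical point. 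Likewise, your appeal to ``reusing the explicit trivialization of $T_1(D^2 \times S^2)$'' to conclude $N_2 \cong S^2 \times S^5$ is a non sequitur as stated, since $N_2$ is defined by a surgery on $S^7$ and you have not explained how the trivialization over $U_2$ constrains that surgery. As written, the argument has a genuine gap at the place where you acknowledge care is needed.

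The paper's actual proof closes this gap via the route you mention only as a ``sanity check,'' and your sketch of that route also needs repair. The paper identifies $U_3 \cong Q \setminus \mathring D^4$, proves $TQ|_{U_3}$ is trivial by contracting $U_3$ to $S^2 \vee S^2$ and invoking the triviality of $TQ|_{D^2 \times S^2}$ from Proposition \ref{prop:levelset2}, and then decomposes $U_3$ into three copies of $D^2 \times D^2$ so that the union of the central piece with either of the other two is a $D^2 \times S^2$, with both gluing maps literally equal to the map $\varphi$ that built $M_2$. Only after this concrete identification of the attaching maps is the connected-sum picture from the Morse discussion invoked. Your version of the decomposition, ``two pieces of the form $D^2 \times S^2$ glued along their boundaries,'' is not quite right: gluing two copies of $D^2 \times S^2$ along their full boundaries $S^1 \times S^2$ produces a closed manifold, whereas $U_3$ has boundary $S^3$; the two copies of $D^2 \times S^2$ in fact overlap in a shared $D^2 \times D^2$. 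To complete your proof you need either an actual computation of the framing class, or a fully worked-out explicit construction including the triviality of $TU_3$ and the identification of the gluing maps with $\varphi$.
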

\begin{proof}
Again, let $U_3 := \pi(M_3)$ be the accessible region in this case. Then 
$$U_3 = \{(q_1, q_2) | gm_2l_2(kz_1 + z_2) + V_0 \varleq V(P_3) + \varepsilon\}.$$
We note that, since $V$ is a Morse function, $U_3 \cong Q \setminus \mathring{D^4}$, since $P_4$ is the maximal critical point of $V$. The left diagram in Figure \ref{fig:thirdlevel} represents $U_3$.
\begin{figure}[!h]
\begin{center}
\begin{tikzpicture}[scale=2]
\draw[line width=2pt] (-2.5, -1) -- (-2.5,1) -- (-1.1, 1) -- (-0.5, 0.2) -- (-0.5, -1) -- cycle;
\draw[dashed,line width=1pt] (-1.1, 1) -- (-0.5,1) -- (-0.5, 0.2);
\fill[color=gray!40!white] (0.5, 0) -- (0.5, 1) -- (0.9, 1) -- (0.9, 0) -- cycle;
\fill[color=gray!40!white] (1.5, -1) -- (1.5, -0.6) -- (2.5, -0.6) -- (2.5, -1) -- cycle;
\draw[line width=2pt] (0.5, -1) -- (0.5,1) -- (0.9, 1) -- (0.9, -0.6) -- (2.5, -0.6) -- (2.5,-1) -- cycle;
\draw[dashed,line width=1pt] (0.9, 1) -- (2.5,1) -- (2.5,-0.6);
\draw[dashed,line width=1pt] (0.4, 0) -- (1, 0);
\draw[dashed,line width=1pt] (1.5, -1.1) -- (1.5, -0.5);
\draw (-1,0.5) node {$\partial U_3$}
      (-0.7, 0.8) node {$D^4$}
      (-2.65,0) node {$z_2$}
      (-1.5, -1.15) node {$z_1$}
      (0,0) node {$\cong$}
      (1.1, -0.4) node {$\partial$}
      (1.7, 0.4) node {$D^2 \times D^2$};
\end{tikzpicture}
\caption{$U_3 \subset S^2 \times S^2$, with $k > 1$\label{fig:thirdlevel}}
\end{center}
\end{figure}
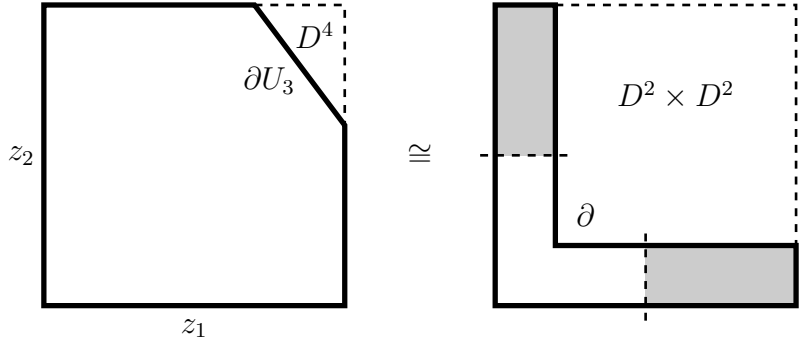
We show next that $TU_3 = TQ|_{U_3}$ is trivial.  Recall that, in Proposition \ref{prop:levelset2}, we proved that $T(D^2\times S^2)=T(Q)|_{D^2\times S^2}$ is trivial. Consequently   $TQ|_{\{x\} \times S^2}$ and $TQ|_{S^2 \times \{x\}}$ are trivial for any $x \in S^2$, since they are restrictions of the bundle $T(Q)|_{D^2\times S^2}$. 
On the other hand  $U_3$ has the same homotopy type of $S^2 \vee S^2$. This can be shown in two ways. First  consider the thick L shaped figure  in the right  diagram of Figure \ref{fig:thirdlevel}. This represents $U_3$
since $D^4 \cong D^2\times D^2$.  
The thick letter L can be contracted to a thin letter L, where the horizontal segment of the letter  coincides with the bottom side of the square, and the vertical segment coincide with the left side of the square. The thin letter L is precisely $S^2 \vee S^2$. Second consider the negative gradient flow (e.g. with respect to the standard round metric on $S ^2$) of $V$. Such flow  gives a contraction of $U_3$ to $S^2 \vee S^2$. Moreover contraction preserves the triviality of restriction of vector bundles (this follows from  \cite{Husemoller} Theorem 4.7, page 30).

 On the other hand, the dashed lines divide the right figure into $3$ parts, each of which is homeomorphic to $D^4 \cong D^2\times D^2$.
The union of any of the two  shaded areas with the unshaded one is  homeomorphic to $D^2 \times S^2$.
Comparing Figure \ref{fig:thirdlevel} to Figure \ref{fig:secondlevel}, we see that the attaching maps at both dashed lines (in Figure \ref{fig:thirdlevel}) are identical to $\varphi$, the map that defines $M_2$. From the discussion using Morse theory, we see that $N_2 \cong S^2 \times S^5$ and $M_3 \cong M_2 \# N_2 \cong (S^2 \times S^5)^{\# 2}$.
\end{proof}
\section{The Betti numbers}
In this section we compute the Betti numbers of the energy surfaces, from the explicit description given in the previous section.
\subsection{Properties of homology groups}\label{subsec:homologygroups}
Let $X$ be a manifold  of dimension $n$, then we  denote by $H_k(X; G)$ the $k$-th homology group  of $X$ and by $H_*(X;G) = \dsum_{i \in \ZZ} H_i(X;G)$ the homology of $X$, where $G$ is an abelian group. In the following we will just write $H_k(X)$ to denote the $k$-th homology group  of $X$ with coefficients in $\RR$.  We now recall some facts on the homology groups (with coefficients in $\RR$). These can be found in most standard textbooks, for example \cite{Hatcher, Munkres, Spanier}.

\begin{theorem}[Properties of homology groups]\label{thm:prophomology}
Let  $X$ be a manifold  of dimension $n$, then  the homology groups with  coefficients in $\RR$ are finite dimensional vector spaces which have the following properties:
\begin{enumerate}
\item $H_i(X) = 0$ if $i \notin [0, n]$.
\item If $X$ is connected, then $H_0(X) \cong \RR$.
\item ({\bf Poincar\'e duality}) If $X$ is orientable and closed, then 
$$H_i(X) \cong H_{n-i}(X).$$
\item If $X$ is contractible (to a point), then $H_*(X) = H_0(X) \cong \RR$. In particular, $H_*(pt) \cong \RR$.
\item ({\bf K\"unneth formula}) If $Y$ is another manifold, of dimension $m$, then 
$$H_k(X \times Y) \cong \Dsum_{i + j = k} H_i(X) \tensor H_j(Y).$$
\item If $X = S^k$, then $H_i(S^k) = 0$ if $i \neq 0, k$, and $H_0(S^k) \cong \RR \cong H_k(S^k)$.
\item ({\bf Connected sum}) If $X = X_1 \# X_2$ is a connected sum of two manifolds of the same dimension $n$, then we have
$$H_i(X) \cong H_i(X_1) \dsum H_i(X_2) \text{ for } 0 < i < n.$$
\end{enumerate}
\end{theorem}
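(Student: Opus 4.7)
The plan is to observe that all seven items are standard results in algebraic topology over a field, and then organize the proof by grouping them according to the tools needed. Items (1), (2), (4), (6) are essentially immediate from the Eilenberg--Steenrod axioms together with the cellular/simplicial model: dimension reasons force vanishing outside $[0,n]$; path-connectedness implies $H_0 \cong \RR$; a homotopy equivalence to a point kills all higher homology; and the homology of $S^k$ is computed directly from the CW structure with two cells. For each of these I would simply cite the relevant sections of Hatcher (Chapters 2.1--2.2) and verify that the computations go through over $\RR$ rather than $\ZZ$, which is automatic since the universal coefficient theorem over a field gives $H_i(X;\RR) \cong H_i(X;\ZZ) \tensor \RR$.

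The three substantive items are (3), (5), (7), and I would handle them in that order. For Poincaré duality (3), I would invoke the fact that a closed orientable $n$-manifold carries a fundamental class $[X] \in H_n(X;\RR)$ and that cap product with $[X]$ induces an isomorphism $H^{n-i}(X;\RR) \to H_i(X;\RR)$; combining with the universal coefficient isomorphism $H^{n-i}(X;\RR) \cong \Hom_\RR(H_{n-i}(X;\RR),\RR) \cong H_{n-i}(X;\RR)$ (using finite dimensionality, which follows from the manifold being compact and triangulable) yields the stated duality. For the K\"unneth formula (5), the point is that $\RR$ is a field, so the $\mathrm{Tor}$ term in the full K\"unneth theorem vanishes and one obtains the clean tensor-product decomposition.

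For the connected sum formula (7), I would apply the Mayer--Vietoris sequence to the decomposition
\[
X_1 \# X_2 = X_1' \cup X_2', \qquad X_1' \cap X_2' \simeq S^{n-1},
\]
where $X_j'$ is $X_j$ with an open $n$-disk removed, so $X_j' \simeq X_j \setminus \{pt\}$ and deformation retracts onto the $(n-1)$-skeleton of a CW model of $X_j$. In particular, for $0 < i < n-1$, $H_i(X_j') \cong H_i(X_j)$, and $H_i(S^{n-1}) = 0$, so the Mayer--Vietoris sequence immediately yields $H_i(X) \cong H_i(X_1) \dsum H_i(X_2)$. For the remaining case $i = n-1$ within the range $0 < i < n$, I would track the connecting homomorphism carefully: the generator of $H_{n-1}(S^{n-1})$ maps to the classes of the boundary spheres in $H_{n-1}(X_j')$, each of which is null-homologous (it bounds the complement of the removed disk inside $X_j$), so the connecting map is zero and the splitting persists.

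The main obstacle is purely expository rather than mathematical: keeping the discussion short given that each item has a self-contained textbook treatment. In particular, the connected-sum calculation (7) is the only place where a short bespoke argument is needed, and there the subtle point is checking that the boundary sphere is null-homologous in $X_j'$ so that the Mayer--Vietoris connecting map vanishes and the direct-sum decomposition is valid.
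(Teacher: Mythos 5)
The paper does not actually prove this theorem: items (1)--(6) are simply cited from the standard references \cite{Hatcher, Munkres, Spanier}, and item (7) is dispatched in one line as ``a corollary of the Mayer--Vietoris sequence.'' So your proposal supplies details the paper omits, and your treatment of (1)--(6) is correct and standard.

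The one place where you produce a bespoke argument is item (7), and there is a gap precisely at the case $i=n-1$ you single out. You claim that the class of the boundary sphere is null-homologous in $X_j'$ because ``it bounds the complement of the removed disk inside $X_j$.'' But having $S^{n-1}$ as the point-set boundary of $X_j'$ does not make $[S^{n-1}]$ a \emph{homological} boundary in $X_j'$; for that you need $X_j'$ to carry a relative fundamental class in $H_n(X_j',\partial X_j')$ mapping onto $[\partial X_j']$ under $\partial_*$, which requires $X_j$ to be orientable. Without that hypothesis the statement is in fact false: $\RR P^2 \# \RR P^2$ is the Klein bottle, whose first homology with $\RR$ coefficients is $\RR$, whereas $H_1(\RR P^2;\RR)\dsum H_1(\RR P^2;\RR)=0$. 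Concretely, in the M\"obius band $\RR P^2\setminus \mathring{D}^2$ the boundary circle is homologous to twice the core circle, hence not null-homologous, and the Mayer--Vietoris connecting map is an injection rather than zero. The remedy is to add the (tacit) hypothesis that $X_1$ and $X_2$ are closed orientable $n$-manifolds --- which is all the paper ever uses, since its connected summands are copies of $S^2\times S^5$ --- and then your relative-fundamental-class argument closes the $i=n-1$ case.
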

The last of the above is a corollary of the Mayer-Vietoris sequence in homology.

\begin{remark}
Note that in this paper we consider homology groups with real coefficients. Many books consider coefficients in $\ZZ$, or in a general Abelian group. 
In the latter two cases the properies of the homology are slightly different, in particular properties (3), (5), and (7) take a  more complicated form.  
\end{remark}

We are interested in the Betti numbers $\beta_i(X)$,i.e, in the dimensions of the $H_i(X)$,
$$\beta_i(X) := \dim_\RR H_i(X).$$
Sometimes we drop the space $X$ from the notation $\beta_i(X)$ if the space is clear. The K\"unneth formula then yields the
 following property of Betti numbers:
$$\beta_k(X \times Y) = \sum_{i+j = k} \beta_i(X) \beta_j(Y).$$
For example, since $\beta_0(S^2) = \beta_2(S^2) = 1$, and $\beta_1(S^2)=0$ by K\"unneth formula, we have
$$\beta_0(S^2 \times S^2) = \beta_4(S^2 \times S^2) = 1, ~ \beta_2(S^2 \times S^2)=2,  \text{ and } \beta_1(S^2 \times S^2)=\beta_3(S^2 \times S^2)=0 .$$
We can write down the Betti numbers of the energy surface $M_1 \cong S^7$ by using Theorem \ref{thm:prophomology} part (6), and by using the K\"unneth formula, we can write those of $M_2 \cong S^2 \times S^5$. We  obtain 
\begin{itemize}
\item for $M_1$, $\beta_0 = \beta_7 = 1$, and all others vanish,
\item for $M_2$, $\beta_0 = \beta_2 = \beta_5 = \beta_7 = 1$, and all others vanish.
\end{itemize}
Since $M_3 \cong (S^2 \times S^5)^{\# 2}$, using the connected sum property, we get
\begin{itemize}
\item for $M_3$, $\beta_0 = \beta_7 = 1, \beta_2 = \beta_5 = 2$, and all others vanish.
\end{itemize}

\subsection{The Gysin sequence} 
\begin{defn}
 We say that $p: X \to B$ is a sphere bundle with fiber $S^k$ if the following holds
\begin{enumerate}[(a)]
\item at each point $b \in B$, $p^{-1}(b) \cong S^k$
\item and there is a neighbourhood $U$ of $b$ such that $p^{-1} (U) \cong U \times S^k$ such that the map $p$ corresponds to the projection to the first factor under this homeomorphism.
\end{enumerate}
\end{defn}

The simplest sphere bundle is the product $B \times S^k$, for which the K\"unneth formula provides the Betti numbers. More generally, we have the Gysin sequence.
\begin{theorem}[Gysin sequence]
If $p : X \to B$ is a sphere bundle with fiber $S^k$ and $k > 0$, then
$$\ldots \to H_i(X) \xto{p_*} H_i(B) \xto{\Psi_\xi} H_{i-k-1}(B) \xto{} H_{i-1}(X) \xto{p_*} H_{i-1}(B) \to \ldots$$
is a long exact sequence of homology groups.
\end{theorem}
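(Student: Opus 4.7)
The plan is to obtain the Gysin sequence from two standard ingredients: the long exact sequence of homology for a pair applied to the associated disk bundle together with its boundary sphere bundle, and the Thom isomorphism for that disk bundle. This is the classical derivation and avoids going through the Serre spectral sequence, which would be heavier machinery than needed in the current setting.

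First, I would form the associated $(k+1)$-disk bundle $\pi : D \to B$ having $X$ as its boundary, so that the inclusion $j : X \into D$ satisfies $\pi \circ j = p$. The zero section provides a deformation retraction of $D$ onto $B$, and hence $\pi_* : H_i(D) \xto{\cong} H_i(B)$ is an isomorphism. Under this identification the composition $H_i(X) \xto{j_*} H_i(D) \xto{\cong} H_i(B)$ coincides with $p_*$. The long exact sequence of the pair $(D, X)$ reads
\[
\ldots \to H_i(X) \xto{j_*} H_i(D) \to H_i(D, X) \to H_{i-1}(X) \to \ldots
\]

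Next, I would invoke the Thom isomorphism $H_i(D, X) \cong H_{i-k-1}(B)$, valid with real coefficients for any $\RR$-oriented sphere bundle; in the situations of interest here, where $B$ is a manifold and the sphere bundle arises from (co)tangent bundles of orientable manifolds, this orientation hypothesis is satisfied. Substituting the two identifications into the pair sequence yields exactly
\[
\ldots \to H_i(X) \xto{p_*} H_i(B) \xto{\Psi_\xi} H_{i-k-1}(B) \to H_{i-1}(X) \xto{p_*} H_{i-1}(B) \to \ldots,
\]
and a standard naturality check identifies the resulting connecting map $\Psi_\xi$ with cap product against the Euler class of the bundle.

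The main obstacle is the Thom isomorphism itself. The cleanest route is a Mayer--Vietoris induction over a trivializing open cover of $B$: on each trivializing piece $U$ one has $(D, X)|_U \cong U \times (D^{k+1}, S^k)$, for which the K\"unneth formula delivers the desired isomorphism, and these local isomorphisms patch together via the five-lemma in a Mayer--Vietoris comparison. For bases that cannot be covered by finitely many trivializing opens one passes to a colimit; in our applications $B$ will be a compact manifold and finitely many trivializations suffice, so no further subtlety arises.
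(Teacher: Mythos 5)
The paper does not prove this theorem: it is stated as a known result with a pointer to Spanier (pp.\ 260), where the argument is essentially the disk-bundle/Thom-isomorphism derivation you sketch. So your proposal is not a "different route" from the paper so much as a correct reconstruction of the proof the paper delegates to a reference. The key pieces are all in place: passing to the associated $(k+1)$-disk bundle $D$ with $\partial D = X$, using the long exact sequence of the pair $(D,X)$, identifying $H_i(D)$ with $H_i(B)$ via the zero-section retraction, applying the homological Thom isomorphism $H_i(D,X)\cong H_{i-(k+1)}(B)$, and identifying $\Psi_\xi$ with cap product by the Euler class. The Mayer--Vietoris patching argument for the Thom isomorphism over a finite trivializing cover is also the standard route and is adequate for the compact base manifolds used in this paper.

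One point worth flagging, which you already half-address: as stated in the paper the theorem omits an orientability hypothesis, yet the Thom isomorphism over $\ZZ$ or $\RR$ requires the local system $\{H_k(p^{-1}(b))\}$ to be trivial (equivalently, the sphere bundle to be orientable). You correctly observe that in every application here --- unit (co)tangent bundles of the orientable manifolds $S^2\times S^2$ and its open subsets --- this hypothesis holds, so nothing is lost. If you wanted the statement to be literally true without that hypothesis you would have to work with $\ZZ/2$ coefficients, which is not what the paper needs. With that caveat made explicit, your argument is complete and matches the cited source's approach.
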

\noindent
The map $\Psi_\xi$ is defined from the Thom isomorphism and more details can be found in \cite{Spanier} (pp. 260). In particular, when $X$ is the unit tangent bundle, $\Psi_\xi$ coincides with multiplication by the Euler number of $B$. 
We recall that a sequence of maps between linear spaces
$$\ldots \xto{f_{i+1}} V_{i+1} \xto{f_i} V_i \xto{f_{i-1}} V_{i-1} \xto{} \ldots $$ 
is called \emph{exact} if
$$\ker f_i \cong \img f_{i+1} \text{ for all } i \in \ZZ.$$
If all but $3$ consecutive terms in an exact sequence are zero, e.g.
$$0 \to U \to V \to W \to 0,$$
we say that it is a \emph{short exact sequence}. For a short exact sequence, we have
$$\dim_\RR V = \dim_\RR U + \dim_\RR W.$$
If all but $2$ consecutive terms in an exact sequence are zero, e.g.
$$0 \to U \xto{f} V \to 0,$$
we have $\ker f = 0$ and $\img f = V$, i.e. $f$ is an isomorphism.

We'll compute the Betti numbers of $M_4 \cong T_1Q$ and rederive the results obtained in \cite{Mccord2002} (pp. 1241). The unit (co)tangent bundle $T_1 Q$ is a sphere bundle with fiber $S^3$. Thus the Gysin sequence gives
$$\ldots \to H_i(M_4) \xto{\pi_*} H_i(Q) \xto{\Psi_\xi} H_{i-4}(Q) \xto{} H_{i-1}(M_4) \xto{\pi_*} H_{i-1}(Q) \to \ldots$$
The computation of the Betti numbers for  $S^2\times S^2$ given in the previous section  implies that  $H_i(Q) = 0$, unless $i = 0, 2, 4$. Consequently, the Gysin sequence splits into simpler sequences: 
\begin{itemize}
\item $0 \to H_i(M_4) \xto{\pi_*} H_i(Q) \to 0$ is exact for $i =0,1,2$, and
\item $0 \to H_4(M_4) \xto{\pi_*} H_4(Q) \xto{\Psi_\xi} H_0(Q) \xto{} H_3(M_4) \to 0$ is exact.
\end{itemize}
Together with Poincar\'e duality, we then have 
$$\beta_i(M_4) = \beta_i(Q) = \beta_i(S^2 \times S^2) \text{ for } i \neq 3, 4.$$
The map $\Psi_\xi : \RR \cong H_4(Q) \to H_0(Q) \cong \RR$ is given by multiplication of the Euler number of $Q$:
$$e(Q) = \sum_{i = 0}^4 (-1)^i \beta_i(Q) = 4.$$
It follows that 
$$\ker \Psi_\xi = 0 \text{ and } \img \Psi_\xi = H_0(Q).$$
By the exactness of the sequence, we see that
$$H_4(M_4) \cong \img \pi_* \cong \ker \Psi_\xi = 0.$$
Either by exactness of the sequence again or via Poincar\'e duality, we get $H_3(M_4) \cong H_4(M_4) = 0$.
We have the following table of Betti numbers:
\begin{table}[!h]\caption{The table of Betti numbers for the double pendulum\label{table:Bettinumbers}.}
\begin{center}
\begin{tabular}{|c|c|c|c|c|c|c|c|c|}
\hline
Space & $\beta_0$ & $\beta_1$ & $\beta_2$ & $\beta_3$ & $\beta_4$ & $\beta_5$ & $\beta_6$ & $\beta_7$ \\
\hline
$M_1$ & 1 & 0 & 0 & 0 & 0 & 0 & 0 & 1 \\
\hline
$M_2$ & 1 & 0 & 1 & 0 & 0 & 1 & 0 & 1 \\
\hline
$M_3$ & 1 & 0 & 2 & 0 & 0 & 2 & 0 & 1 \\
\hline
$M_4$ & 1 & 0 & 2 & 0 & 0 & 2 & 0 & 1 \\
\hline
\end{tabular}
\end{center}
\end{table}
\subsection{Integer coefficients}
With exactly  the  same arguments we may find the homology groups with coefficients in $\ZZ$.
In this case  the homology groups are finitely generated abelian groups, so they have the form $H_k(X;\ZZ)\cong \ZZ\oplus\ZZ\oplus\ldots\oplus\ZZ\oplus T$, where $T$ is an abelian group containing finite many elements and is called the torsion part of $H_k(X;\ZZ)$. The $i$-th Betti number of $X$ is the number of copies of $\ZZ$ in $H_i(X)$, and the torsion part is not captured by them. The only torsion in the homology groups we computed lies in
$$H_3(M_4) \cong \ZZ_4.$$
Thus we have the list of homology group with integer coefficients:
\begin{table}[!h]
\caption{The table of homology groups with integer coefficients for the double pendulum\label{table:homology}.}
\begin{center}
\begin{tabular}{|c|c|c|c|c|c|c|c|c|}
\hline
Space & $H_0$ & $H_1$ & $H_2$ & $H_3$ & $H_4$ & $H_5$ & $H_6$ & $H_7$ \\
\hline
$M_1$ & $\ZZ$ & 0 & 0 & 0 & 0 & 0 & 0 & $\ZZ$ \\
\hline
$M_2$ & $\ZZ$ & 0 & $\ZZ$ & 0 & 0 & $\ZZ$ & 0 & $\ZZ$ \\
\hline
$M_3$ & $\ZZ$ & 0 & $\ZZ\dsum \ZZ$ & 0 & 0 & $\ZZ\dsum \ZZ$ & 0 & $\ZZ$ \\
\hline
$M_4$ & $\ZZ$ & 0 & $\ZZ\dsum \ZZ$ & $\ZZ_4$ & 0 & $\ZZ\dsum \ZZ$ & 0 & $\ZZ$ \\
\hline
\end{tabular}
\end{center}
\end{table}
\begin{remark}\label{rmk:coefficients}
The change of coefficients from $\RR$ to $\ZZ$ makes the statements of some topological facts more complicated. Here, for example, the K\"unneth formula will have extra terms coming from the torsion parts of the homology groups. In the statement of Poincar\'e duality, the cohomology groups will no longer be isomorphic to the homology groups. Nonetheless, the exactness of the Gysin sequence (which holds for any coefficients) provides $H_3(M_4) \cong \ZZ_4$ in the above table.
\end{remark}

\begin{remark}
One may ask if the Betti numbers for homology with different coefficients coincide. As a matter of fact they do in the case of $G=\ZZ$ and $\RR$. This is a consequence of the Universal Coefficient Theorem \cite{Hatcher}.
\end{remark}

\section{Alternative computation}
In this section we describe a computation of the Betti numbers without the explicit knowledge of the energy surfaces. This approach was suggested  to the authors by Chris McCord \cite{McCord}. Since some of the fibers of the bundles we are considering are pinched to points we cannot use the Gysin sequence directly. However there is a  generalized Gysin sequence for relative homology groups. This sequenece will be our main tool to obtain the Betti numbers. 
\subsection{Properties of relative homology}
Let $X$ be a manifold of dimension $n$ and $A \subset X$ a submanifold (open or closed), then the  $k$-th relative homology group (with coefficients in $\RR$) of the pair $(X, A)$ is denoted $H_k(X, A)$. The readers are referred again to standard textbooks, e.g. \cite{Hatcher, Munkres, Spanier}, for the properties given below.

\begin{theorem}[Properties of relative homology] \label{thm:proprelhomology}
Let $X$ be a manifold of dimension $n$ and $A \subset X$ a submanifold (open or closed), then the relative homology groups (with coefficients in $\RR$) of the pair $(X, A)$ have the following properties:
\begin{enumerate}
\item ({\bf Long exact sequence for the pair}) There is a long exact sequence of homology groups:
\[\label{eq:longex}
\ldots \to H_k(A) \to H_k(X) \to H_k(X, A) \xto{\partial_*} H_{k-1}(A) \to \ldots
\]
\item ({\bf Excision}) 
Given subspaces  $V \subset A\subset X$ such that the closure of $V$ is contained in the interior of $A$, i.e. $\bar V \subset int(A)$, then
$$H_k(X, A) \cong H_k(X \setminus V, A \setminus V)$$
for all $k$.
\item ({\bf Contraction}) Suppose that $B \subset X$ is a submanifold which contracts to $A$, then
$$H_k(X, A) \cong H_k(X, B).$$
for all $k$.
\item ({\bf K\"unneth formula}) If $Y$ is another manifold, of dimension $m$, then
$$H_k(X \times Y, A \times Y) \cong \dsum_{i+j = k} H_i(X, A) \tensor H_j(Y).$$
\item ({\bf Relative to a point}) Suppose that $H_0(X) \cong \RR$, i.e. $X$ has only one component, then for a point $pt \in X$, we have:
$$H_k(X, pt) \cong H_k(X) \text{ for } k \neq 0 \text{ and } H_0(X, pt) \cong 0.$$
\item ({\bf Boundary}) If $X$ is a manifold of dimension $n$, with boundary $\partial X \neq \emptyset$, then the map $\partial_* : H_n(X, \partial X) \to H_{n-1}(\partial X)$ in part (1) of the theorem  is an isomorphism.
\end{enumerate}
\end{theorem}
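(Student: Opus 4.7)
The plan is to derive all six properties from the basic algebraic machinery applied to the short exact sequence of singular chain complexes
\[
0 \to C_*(A) \to C_*(X) \to C_*(X,A) \to 0,
\]
where $C_*(X,A) := C_*(X)/C_*(A)$, together with a small amount of geometric input coming from barycentric subdivision.

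First I would prove (1) by applying the zig-zag (snake) lemma to the displayed short exact sequence; this yields the long exact sequence in homology with connecting map $\partial_*$ defined by lifting a relative cycle in $C_*(X,A)$ to a chain in $C_*(X)$ whose boundary automatically lies in $C_*(A)$. Properties (5) and (3) are then immediate consequences of (1). For (5), feeding $A = \{\mathrm{pt}\}$ into the long exact sequence and using $H_k(\{\mathrm{pt}\}) = 0$ for $k > 0$, together with the injectivity of $H_0(\{\mathrm{pt}\}) \to H_0(X)$ when $X$ is connected, yields the stated formulas. For (3), the inclusion $A \hookrightarrow B$ is a homotopy equivalence since $B$ contracts to $A$, so the long exact sequences of the pairs $(X,A)$ and $(X,B)$ form a commutative ladder in which the vertical maps $H_k(A) \to H_k(B)$ and $H_k(X) \to H_k(X)$ are isomorphisms, and the five lemma delivers $H_k(X,A) \cong H_k(X,B)$.

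Next I would establish (4) via the Eilenberg--Zilber theorem, which furnishes a natural chain homotopy equivalence between the singular chain complex of a product and the tensor product of the factors' chain complexes. This descends to an equivalence $C_*(X \times Y, A \times Y) \simeq C_*(X,A) \tensor C_*(Y)$, and the algebraic K\"unneth theorem, specialized to coefficients in the field $\RR$ where all $\mathrm{Tor}$ terms vanish, produces the stated isomorphism. No extra work beyond the field hypothesis is required.

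The main obstacle is (2), excision, the one place where genuine geometric content enters. I would introduce the barycentric subdivision operator $\mathrm{Sd}$, invoke the Lebesgue number lemma to argue that iterated application to any relative cycle eventually produces one whose simplices lie entirely either in $X \setminus V$ or in the interior of $A$, and then construct an explicit prism-operator chain homotopy showing $\mathrm{Sd} \simeq \mathrm{id}$. Together these give that the inclusion $(X \setminus V, A \setminus V) \hookrightarrow (X,A)$ induces a chain equivalence on relative chains, hence the claimed isomorphism on homology. Finally, (6) follows by combining excision with the long exact sequence of the pair: using the collar neighborhood theorem to excise the complement of a collar $\partial X \times [0,\eps)$, one identifies $H_n(X, \partial X)$ with $H_n(\partial X \times [0,\eps),\partial X \times \{0\})$, and the long exact sequence of this simpler pair, together with the deformation retraction of $\partial X \times [0,\eps)$ onto $\partial X$, collapses in the top dimension of interest so that $\partial_*$ realizes the asserted isomorphism onto $H_{n-1}(\partial X)$.
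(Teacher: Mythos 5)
The paper gives no proof of this theorem---it refers the reader to the standard textbooks---so your argument is new material rather than a retracing. Parts (1)--(5) of your sketch are correct and standard: the snake lemma for (1), barycentric subdivision plus a prism-operator chain homotopy for (2), the five lemma together with homotopy invariance for (3), Eilenberg--Zilber combined with the algebraic K\"unneth theorem over the field $\RR$ for (4), and specialization of (1) for (5).

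Your argument for (6) is broken. Excision applied to the pair $(X, \partial X)$ requires the closure of the excised set to lie in the interior of $\partial X$ taken inside $X$; that interior is empty because $\partial X$ has codimension one, so you cannot legally excise the complement of a collar. Even setting that aside, the identification $H_n(X, \partial X) \cong H_n(\partial X \times [0,\eps), \partial X \times \{0\})$ you propose would force $H_n(X, \partial X) = 0$ for every $X$, since the inclusion $\partial X \times \{0\} \to \partial X \times [0,\eps)$ is a homotopy equivalence and the relative homology of a pair with a deformation retract vanishes in all degrees. There is also a gap in the statement itself: as written, (6) needs unstated hypotheses (compactness, connectedness, orientability, and connected boundary). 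Without them it fails; for $X = S^1 \times [0,1]$ one computes $H_2(X, \partial X; \RR) \cong \RR$ while $H_1(\partial X; \RR) \cong \RR^2$, so $\partial_*$ cannot be surjective. Under the right hypotheses the statement is a consequence of Poincar\'e--Lefschetz duality: $H_n(X, \partial X; \RR) \cong \RR$ is generated by the relative fundamental class $[X,\partial X]$, and $\partial_*[X,\partial X]$ is the fundamental class of $\partial X$, which generates $H_{n-1}(\partial X;\RR) \cong \RR$ exactly when $\partial X$ is connected. The paper applies (6) only to $(U_2,\partial U_2) = (S^2 \times D^2, S^2 \times S^1)$ and $(U_3,\partial U_3) = (Q \setminus D^4, S^3)$, where these conditions all hold.
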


The facts we stated above are in the form that we will be using, instead of the most general form, cf. Remark \ref{rmk:coefficients}. The readers are referred to the standard textbooks for a complete treatment and detailed proofs.

\subsection{Relative  homology of the pairs $(U_2,\partial U_2)$ and $(U_3,\partial U_3)$}
To find the homology of the pairs $(U_2,\partial U_2)=(S^2\times D^2, S^2\times S^1)$ and $(U_3,\partial U_3)=(S^2\times S^2\setminus D^4, S^3)$
 the following lemmas are required.

\begin{lemma}\label{lemma:relativeapp} Let $D^4 \subset Q$ be a $4$-dimensional ball and $\mathring D^4$ its interior. Then we have
$$H_k(Q \setminus \mathring D^4, \partial(Q \setminus \mathring D^4) ) \cong H_k(Q, pt) \cong \left\{\begin{matrix}
\RR^2, & \text{ for } k = 2 \\ \RR, & \text{ for } k = 4 \\ 0, & \text{ for the other } k.
\end{matrix}\right.$$ 
\end{lemma}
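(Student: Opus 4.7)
The plan is to establish the isomorphism $H_k(Q \setminus \mathring D^4, \partial(Q \setminus \mathring D^4)) \cong H_k(Q, pt)$ using excision and contraction, and then to compute $H_k(Q,pt)$ from the already-known homology of $Q = S^2 \times S^2$ by means of the ``Relative to a point'' property from Theorem \ref{thm:proprelhomology}(5).

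First I would apply excision. The small obstacle is that the formulation in Theorem \ref{thm:proprelhomology}(2) requires $\bar V \subset \mathrm{int}(A)$, so I cannot take $V = \mathring D^4$ and $A = \overline{D^4}$ directly. Instead, embed the given ball as the ball of radius $1$ inside a slightly larger closed ball $\overline{D^4_{1+\eps}} \subset Q$, and set $V = \mathring D^4_{1}$, $A = \overline{D^4_{1+\eps}}$, $X = Q$. Then $\bar V = \overline{D^4_1} \subset \mathring D^4_{1+\eps} = \mathrm{int}(A)$, so excision yields
$$H_k(Q, \overline{D^4_{1+\eps}}) \cong H_k(Q \setminus \mathring D^4_1, \overline{D^4_{1+\eps}} \setminus \mathring D^4_1).$$
The left-hand pair satisfies $H_k(Q, \overline{D^4_{1+\eps}}) \cong H_k(Q,pt)$ by the Contraction property (Theorem \ref{thm:proprelhomology}(3)), since $\overline{D^4_{1+\eps}}$ contracts to a point. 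On the right, $\overline{D^4_{1+\eps}} \setminus \mathring D^4_1$ deformation retracts onto $\partial D^4_1 = \partial(Q \setminus \mathring D^4)$, so Contraction gives
$$H_k(Q \setminus \mathring D^4_1, \overline{D^4_{1+\eps}} \setminus \mathring D^4_1) \cong H_k(Q \setminus \mathring D^4, \partial(Q \setminus \mathring D^4)).$$
Chaining these three isomorphisms establishes the first half of the lemma.

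It remains to compute $H_k(Q,pt)$. By Theorem \ref{thm:proprelhomology}(5), $H_k(Q,pt) \cong H_k(Q)$ for $k \neq 0$ and $H_0(Q,pt) = 0$. From the K\"unneth computation for $S^2 \times S^2$ already recorded in Section \ref{subsec:homologygroups} (namely $\beta_0 = \beta_4 = 1$, $\beta_2 = 2$, and all others vanish), the claimed values follow:  $H_2 \cong \RR^2$, $H_4 \cong \RR$, and all other groups are zero. The only genuinely delicate step is the excision, and once the thickening trick above is in place the rest is bookkeeping with the properties listed in Theorem \ref{thm:proprelhomology}.
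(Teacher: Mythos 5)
Your proposal is correct and follows essentially the same route as the paper: excision plus contraction to reduce $H_k(Q\setminus\mathring D^4, \partial(Q\setminus\mathring D^4))$ to $H_k(Q,pt)$, and then the relative-to-a-point property to read off the answer from the known homology of $S^2\times S^2$. The one genuine (and welcome) difference is your thickening step: the paper applies excision directly to the pair $(Q, D^4)$ removing $V=\mathring D^4$, but then $\bar V = D^4 \not\subset \mathrm{int}(D^4)$, so the hypothesis of Theorem~\ref{thm:proprelhomology}(2) as stated is not literally satisfied (it is saved in practice by the stronger ``good pair'' form of excision); your version, introducing a slightly larger ball $\overline{D^4_{1+\eps}}$ and then contracting the annular difference back to $\partial D^4$, makes the excision hypothesis hold exactly and is the cleaner justification of the same step.
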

\begin{proof}
First, we note that $D^4$ can be contracted to its center in $Q$, thus
$$H_k(Q, pt) \cong H_k(Q, D^4),$$ for all $k$,
by the contraction property above. 
The excision property implies that
$$H_k(Q, D^4) \cong H_k(Q \setminus \mathring D^4, D^4 \setminus \mathring D^4) \cong H_k(Q \setminus \mathring D^4, \partial(Q \setminus \mathring D^4) ),$$
for all $k$.
The result then follows from Theorem \ref{thm:proprelhomology} part (5). 
\end{proof}

\begin{lemma}
$H_k({D^2}, \partial {D^2}) \cong \RR$ for $k = 2$ and vanishes for all other $k$.
\end{lemma}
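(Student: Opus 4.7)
The plan is to compute the relative homology $H_k(D^2,\partial D^2)$ by feeding the known absolute homologies of $D^2$ and $\partial D^2 = S^1$ into the long exact sequence of the pair (Theorem \ref{thm:proprelhomology}, part (1)). Since $D^2$ is contractible, property (4) of Theorem \ref{thm:prophomology} gives $H_0(D^2)\cong \RR$ and $H_i(D^2) = 0$ for $i \geq 1$; and from property (6), $H_0(S^1)\cong H_1(S^1)\cong \RR$ with $H_i(S^1) = 0$ for $i\geq 2$. Plugging these into
\[
\ldots \to H_k(\partial D^2) \to H_k(D^2) \to H_k(D^2,\partial D^2) \xto{\partial_*} H_{k-1}(\partial D^2) \to H_{k-1}(D^2)\to \ldots
\]
immediately kills $H_k(D^2,\partial D^2)$ for $k\geq 3$, since the flanking absolute groups vanish in those degrees.

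For $k = 2$, the relevant segment reads $0 \to H_2(D^2,\partial D^2) \to H_1(\partial D^2) \to H_1(D^2) = 0$, so $\partial_*$ realizes an isomorphism $H_2(D^2,\partial D^2) \cong H_1(S^1) \cong \RR$. As a sanity check, this also follows directly from the boundary isomorphism of Theorem \ref{thm:proprelhomology} part (6), applied to $D^2$ as a compact $2$-manifold with boundary.

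For the remaining low-degree cases $k=1$ and $k=0$, the tail of the sequence becomes
\[
0 \to H_1(D^2,\partial D^2) \to H_0(\partial D^2) \xto{\iota_*} H_0(D^2) \to H_0(D^2,\partial D^2) \to 0,
\]
so everything reduces to understanding $\iota_*:H_0(S^1)\to H_0(D^2)$. Since both $S^1$ and $D^2$ are connected, the inclusion sends the generating $0$-cycle to a generating $0$-cycle, so $\iota_*$ is an isomorphism $\RR\to \RR$. Exactness then forces $H_1(D^2,\partial D^2) = \ker\iota_* = 0$ and $H_0(D^2,\partial D^2) = \coker\iota_* = 0$, completing the claim.

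There is really no serious obstacle here; the only point that requires a moment of care is verifying that the inclusion-induced map on $H_0$ is an isomorphism rather than zero, which is the reason the $k=0,1$ groups both vanish instead of contributing a copy of $\RR$. Everything else is bookkeeping in a long exact sequence whose flanking terms are already computed.
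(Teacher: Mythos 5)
Your argument is the same as the paper's: plug the absolute homologies of the contractible disc and of $S^1$ into the long exact sequence of the pair, observe that $\partial_*$ is an isomorphism in degree $2$, and use the fact that the inclusion induces an isomorphism on $H_0$ to kill the low-degree relative groups. The only addition is your sanity check via the boundary isomorphism of Theorem \ref{thm:proprelhomology} part (6), which the paper omits but which is consistent.
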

\begin{proof}
 Note that $\partial {D^2} \cong S^1$.
We use the long exact sequence for the pair (Theorem \ref{thm:proprelhomology}, part (1)):
$$0 \to H_2({D^2}, S^1) \xto{\partial_*} H_1(S^1) \to H_1({D^2}) \cong 0 \to $$
$$ \to H_1({D^2}, S^1) \to H_0(S^1) \xto{\cong} H_0({D^2}) \to H_0({D^2}, S^1) \to 0.$$
It follows that $H_2({D^2}, S^1) \cong H_1(S^1) \cong \RR$ and $H_1({D^2}, S^1) \cong H_0 ({D^2}, S^1) \cong 0$.
\end{proof}

Now we can determine the homology  groups for the pairs $(U_2, \partial U_2)$ and $(U_3, \partial U_3)$:

\begin{prop}\label{prop:relativehomology}
Recall that  $U_2 \cong S^2 \times D^2$,  $\partial U_2 \cong S^2 \times S^1$, $U_3 = Q \setminus D^4$,  and  $\partial U_3 = S^3$. Then
\begin{enumerate}[(a)]
 \item \[H_k(U_2, \partial U_2) \cong \left\{\begin{matrix}
\RR, & \text{ when } k = 2, 4 \\ 0, & \text{ for all other } k.
\end{matrix}\right.\]
\item \[ H_k(U_3, \partial U_3) \cong \left\{\begin{matrix}
\RR^2, & \text{ when } k = 2 \\
\RR, & \text{ when } k = 4 \\ 0, & \text{ for all other } k.
\end{matrix}\right.\]
\end{enumerate}
\end{prop}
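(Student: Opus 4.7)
The plan is to derive both parts directly from the two preceding lemmas, combined with the Künneth formula for pairs and the ``Relative to a point'' clause of Theorem~\ref{thm:proprelhomology}. All the real geometric work (excision, the long exact sequence of the disk pair) has already been absorbed into those lemmas, so what remains is essentially bookkeeping.

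For part (a), I would exploit the fact that $(U_2, \partial U_2) \cong (D^2, S^1) \times S^2$ is a product pair in which only the first factor carries the relative structure. Applying the Künneth formula for pairs (Theorem~\ref{thm:proprelhomology}, part (4)) with $(X,A) = (D^2, S^1)$ and $Y = S^2$ yields
\[
H_k(U_2, \partial U_2) \cong \bigoplus_{i+j=k} H_i(D^2, S^1) \tensor H_j(S^2).
\]
The second lemma supplies $H_i(D^2, S^1) \cong \RR$ for $i=2$ and zero otherwise, while $H_j(S^2) \cong \RR$ for $j=0,2$ and vanishes elsewhere. Only the two bidegrees $(i,j)=(2,0)$ and $(2,2)$ contribute, giving $\RR$ in degrees $k=2$ and $k=4$ and zero in all other degrees, as claimed.

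Part (b) is more immediate. Since $U_3 = Q \setminus \mathring{D}^4$ with $\partial U_3 = S^3$ is precisely the pair analyzed in Lemma~\ref{lemma:relativeapp}, that lemma already produces the stated groups. If one prefers to derive them without quoting the final conclusion of the lemma, I would instead use the identifications $H_k(U_3, \partial U_3) \cong H_k(Q, D^4) \cong H_k(Q, pt)$ provided by excision and contraction, and then invoke ``Relative to a point'' to get $H_k(Q, pt) \cong H_k(Q)$ for $k \neq 0$. Plugging in the Betti numbers $\beta_0(Q) = \beta_4(Q) = 1$, $\beta_2(Q) = 2$, $\beta_1(Q) = \beta_3(Q) = 0$ for $Q = S^2 \times S^2$ computed in Section~4.1 yields $\RR^2$ in degree $2$, $\RR$ in degree $4$, and zero elsewhere.

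No genuine obstacle is anticipated. The main items to double-check are the two ``setup'' identifications that feed the lemmas: that the boundary of $S^2 \times D^2$ is indeed $S^2 \times S^1$ (clear from the product structure) and that $\partial(Q \setminus \mathring{D}^4) \cong S^3$ (the boundary sphere of the excised ball). One should also take care that the Künneth formula is applied on the correct side, since as stated in Theorem~\ref{thm:proprelhomology}(4) only the first factor is a pair.
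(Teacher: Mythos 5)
Your proposal is correct and follows essentially the same route as the paper: part (a) via the relative K\"unneth formula applied to $(D^2,S^1)\times S^2$ together with the lemma for $H_*(D^2,S^1)$, and part (b) by directly citing Lemma~\ref{lemma:relativeapp}. The alternative unwinding you sketch for (b) is exactly the internal argument of that lemma, so nothing new is required.
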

\begin{proof}
\begin{enumerate}[(a)]
\item  By K\"unneth formula, we have
$$ H_k(U_2, \partial U_2) \cong \Dsum_{i + j = k} H_i(S^2) \tensor H_j(D^2, \partial D^2)\cong \left\{\begin{matrix}
\RR & \text{ when } k = 2, 4 \\ 0 & \text{ for all other } k.
\end{matrix}\right.$$
\item  The proof follows from  Lemma \ref{lemma:relativeapp}.
\end{enumerate}
\end{proof}

\subsection{Relative Gysin sequence}
We now introduce a Gysin sequence for relative homology group. This is a generalized version of the standard Gysin sequence. 

\begin{theorem}[Relative Gysin sequence]\label{thm:relativeGysin}
Let $p : X \to B$ be an orientable  fiber bundle with  fiber the $k$-sphere ($S^k$), with $k\geq 1$, and path-connected base space
$B$. If $C\subset B$ and $K=p^{-1}(C)$ then the homology sequence  for the pair  
$$\ldots \to H_i(X, K) \xto{p_*} H_i(B, C) \to H_{i-k-1}(B, C) \xto{} H_{i-1}(X, K) \xto{p_*} H_{i-1}(B, C) \to \ldots$$
is exact. 
\end{theorem}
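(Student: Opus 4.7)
The plan is to lift the construction of the standard (absolute) Gysin sequence to the relative setting by replacing the Thom isomorphism and the long exact sequence of a pair with their relative counterparts. Since $p : X \to B$ is an orientable $S^k$-bundle, I would first realize it as the unit sphere bundle of an oriented real vector bundle of rank $k+1$ and write $\pi : D \to B$ for its closed unit disk bundle. Restricting to $C$ gives a sub-disk bundle $D_C := \pi^{-1}(C) \subset D$ whose boundary sphere bundle is exactly $K$. The triple $(D,\, X \cup D_C,\, D_C)$ inside $D$ will be the engine generating the desired sequence.

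The two ingredients I need are the following identifications. The zero section $B \hookrightarrow D$ is a deformation retract that carries $C$ onto itself inside $D_C$, so $(D, D_C) \simeq (B, C)$ and hence $H_i(D, D_C) \cong H_i(B, C)$ for all $i$. The second, less elementary ingredient is the relative Thom isomorphism
\[ H_i(D,\, X \cup D_C) \cong H_{i-k-1}(B, C). \]
I would prove this by comparing, via the five lemma, the long exact sequence of the triple $(D,\, X \cup D_C,\, X)$ --- in which excision yields $H_i(X \cup D_C, X) \cong H_i(D_C, K)$ --- against the long exact sequence of the pair $(B, C)$ shifted down by $k+1$. The absolute Thom isomorphism identifies $H_i(D, X) \cong H_{i-k-1}(B)$ and $H_i(D_C, K) \cong H_{i-k-1}(C)$, and naturality of the Thom class under restriction from $B$ to $C$ makes the resulting ladder commute.

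With these two isomorphisms in hand, I would write down the long exact sequence of the triple $(D,\, X \cup D_C,\, D_C)$:
\[ \ldots \to H_i(X \cup D_C, D_C) \to H_i(D, D_C) \to H_i(D,\, X \cup D_C) \to H_{i-1}(X \cup D_C, D_C) \to \ldots. \]
Excision of $D_C \setminus K$ from the first term gives $H_i(X \cup D_C, D_C) \cong H_i(X, K)$. Substituting this together with the two identifications above converts the triple sequence into
\[ \ldots \to H_i(X, K) \xto{p_*} H_i(B, C) \to H_{i-k-1}(B, C) \to H_{i-1}(X, K) \to \ldots, \]
which is exactly the claimed sequence. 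That the leftmost map is induced by $p$ is visible once one observes that the zero-section retraction of $D$ onto $B$ composed with the inclusion $X \hookrightarrow D$ recovers $p$ itself.

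The main obstacle is not the exactness --- which is automatic once all substitutions are made --- but the verification of the relative Thom isomorphism and the compatibility of the connecting homomorphism in the triple sequence with the classical Gysin boundary map, i.e.\ cap product with the Euler class. These are standard diagram chases relying on naturality of the Thom class for the pair $(B, C)$ together with the commuting square of zero-section retractions, but they contain the real content of the argument and are precisely what prevents Theorem \ref{thm:relativeGysin} from being a purely formal corollary of the absolute Gysin sequence.
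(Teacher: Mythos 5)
The paper does not actually prove this theorem; it simply cites Spanier (p.~483), where the relative Gysin sequence is extracted from the Serre spectral sequence of a fibration pair. Your proposal is a genuinely different and more elementary route: you derive the sequence from the long exact sequence of the triple $(D,\, X \cup D_C,\, D_C)$ together with a relative Thom isomorphism, itself obtained by a five-lemma comparison of the triple sequence for $(D, X \cup D_C, X)$ with the pair sequence for $(B,C)$ shifted by $k+1$. This avoids spectral-sequence machinery entirely and is essentially the ``disk-bundle'' proof of the Gysin sequence promoted to relative homology; the bookkeeping you describe is correct and the identification $H_i(X \cup D_C, D_C) \cong H_i(X,K)$ by excision (using $X \cap D_C = K$) is the right way to close the loop.

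There is, however, one genuine gap in the opening step. You assert that an orientable $S^k$-bundle can be ``realized as the unit sphere bundle of an oriented real vector bundle of rank $k+1$.'' This is false in general: the structure group of an $S^k$-bundle is $\mathrm{Homeo}(S^k)$ (or $\mathrm{Diff}(S^k)$), and reducing it to $O(k+1)$ is an obstruction problem that does not always have a solution. In the application in this paper the bundle genuinely is the unit sphere bundle of $T^*Q$ restricted to $U_i$, so no harm is done there, but the theorem as stated is for arbitrary orientable sphere bundles and your proof as written does not cover it. The standard repair is to replace the disk bundle $D$ by the mapping cylinder $M_p$ of $p : X \to B$, with $B$ sitting inside $M_p$ as the deformation retract playing the role of the zero section; the Thom class then lives in $H^{k+1}(M_p, X)$ and every step of your argument goes through verbatim with $D$, $D_C$ replaced by $M_p$, $M_{p|_C}$. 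With that substitution your proof is complete and valid.
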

\noindent
See \cite{Spanier} (pp. 483) for more details, including a proof of the theorem. 
We will use the relative Gysin sequence to compute the homology groups of the energy surfaces $M_i$, for $i = 2, 3$.
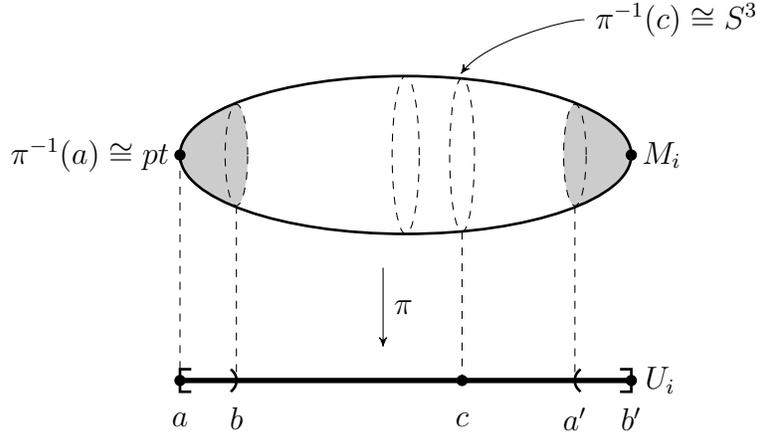
\begin{figure}[!t]
\begin{center}
\begin{tikzpicture}[scale=1.5,>=stealth']
\begin{scope}
\clip (0,0) ellipse [x radius=2,y radius=0.7];
\fill[color=gray!40!white] (-1.5, -0.45) arc (-90:90:0.1 and 0.45) -- (-1.5, 0.5) -- (-2, 0.5) -- (-2,-0.5) -- (-1.5, -0.5) -- cycle;
\fill[color=gray!40!white] (1.5, 0.45) arc (90:270:0.1 and 0.45) -- (1.5, -0.5) -- (2, -0.5) -- (2,0.5) -- (1.5, 0.5) -- cycle;
\end{scope}
\draw[line width=1pt] (0,0) ellipse [x radius=2,y radius=0.7];
\draw[dashed] (0,0) ellipse [x radius=0.12,y radius=0.7];
\draw[dashed] (0.5, 0) ellipse [x radius=0.11,y radius=0.68];
\draw[dashed] (1.5, 0) ellipse [x radius=0.1, y radius=0.45];
\draw[dashed] (-1.5, 0) ellipse [x radius=0.1, y radius=0.45];
\draw[line width=2pt] (-2, -2) -- (2, -2);
\fill (-2,0) circle (0.05) node[anchor=east] {$\pi^{-1}(a) \cong pt$}
      (2, 0) circle (0.05) node[anchor=west] {$M_i$}
      (-2, -2) circle (0.05)
      (2, -2) circle (0.05) node[anchor=north,inner sep=10pt] {$b'$} node[anchor=west,inner sep=5pt] {$U_i$}
      (0.5, -2) circle (0.05);
\draw[dashed] (-2,0) -- (-2, -2) node[anchor=north,inner sep=12pt] {$a$};
\draw[dashed] (0.5,-0.68) -- (0.5, -2) node[anchor=north,inner sep=12pt] {$c$};
\draw[dashed] (1.5,-0.45) -- (1.5, -2) node[anchor=north,inner sep=10pt] {$a'$};
\draw[dashed] (-1.5,-0.45) -- (-1.5, -2) node[anchor=north,inner sep=10pt] {$b$};
\draw[line width=1pt] (-1.9, -1.9) -- (-2, -1.9) -- (-2, -2.1) -- (-1.9, -2.1);
\draw[line width=1pt] (1.9, -1.9) -- (2, -1.9) -- (2, -2.1) -- (1.9, -2.1);
\draw[line width=1pt,rounded corners=5pt] (-1.55,-2.1) -- (-1.45, -2) -- (-1.55, -1.9);
\draw[line width=1pt,rounded corners=5pt] (1.55,-2.1) -- (1.45, -2) -- (1.55, -1.9);
\draw[->] (-0.2, -1) -- (-0.2, -1.7);
\draw (-0.2,-1.35) node[anchor=west] {$\pi$};
\draw (2.4,1.2) node[name=level] {$\pi^{-1}(c) \cong S^3$};
\draw[->] (level.west) .. controls (1.4,1.2) and (0.7, 1) .. (0.49, 0.72);
\end{tikzpicture}
\caption{A diagrammatic representation of the bundle $\pi:M_i\to U_i$. The union of the intervals $[a,b)$ and $(a',b']$ represents $V_i$, while the endpoints of $[a,b']$ represent $\partial U_i$ (note that in our case the sets  $\partial U_i$ and $V_i$ are connected, even if in the figure they are not). The  shaded areas of the ellipsoid represent $W_i=\pi^{-1}(V_i)$, and the poles represent $\pi^{-1}(\partial U_i)$. The preimage of an interior point of $U_i$ under $\pi$ is a three-sphere $S^3$ (represented by a circle), and that of a point on the boundary  $\partial U_i$ is a point.\label{fig:excision} }
\end{center}
\end{figure}

Recall that for $i = 2, 3$, $M_i \cong T_1 U_i / \sim_\partial$, where $T_1 U_i$ is a $3$-sphere bundle over $U_i$. Let $\pi : M_i \to U_i$ denote the map induced by the projection of the (co)tangent bundle $TU_i \to U_i$, then
$$\pi|_{\partial U_i} : \pi^{-1}(\partial U_i) \xto{\cong} \partial U_i.$$

Note that the energy surfaces $M_i$  for $i = 2, 3$ are not quite sphere bundles (since some of the fibers are pinched to points) and thus we cannot apply directly the preceding sequence. The following lemma shows  that even if  the relative Gysin sequence does not apply  to the pairs $(U_i,\partial U_i)$, $(M_i,\pi^{-1}(\partial U_i))$ we can apply it to slightly modified spaces that have the same relative homology of the  foregoing  pairs. 

\begin{lemma}\label{lemma:excision}
Assume $M_i\setminus\pi^{-1}(\partial U_i)$ is a fiber bundle with fiber $S^3$ and base $U_i\setminus\partial U_i$ then 
 $$H_k(U_i, \partial U_i) \cong H_k(U_i \setminus \partial U_i, {V_i} \setminus \partial U_i) \text{ and }$$
$$H_k(M_i, \pi^{-1}(\partial U_i)) \cong H_k(M_i \setminus \pi^{-1}(\partial U_i), W_i \setminus \pi^{-1}(\partial U_i)),$$
for all $k$, where  $V_i \subset U_i$ is a collar neighbourhood of $\partial U_i$\footnote{The boundary of a compact manifold with boundary always has a collar neighbourhood.}, and $W_i=\pi^{-1}(V_i)$.
\end{lemma}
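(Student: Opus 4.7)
The plan is to apply the contraction property (Theorem \ref{thm:proprelhomology}(3)) and the excision property (Theorem \ref{thm:proprelhomology}(2)) to each of the two pairs in turn. Both isomorphisms follow the same two-step pattern: first contract the collar onto its boundary to obtain an isomorphic relative homology, then excise the boundary out of both spaces in the pair.

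For the first isomorphism, since $V_i$ is a collar neighbourhood of $\partial U_i$, there is a homeomorphism $V_i \cong \partial U_i \times [0,1)$ sending $\partial U_i$ to $\partial U_i \times \{0\}$. The straight-line retraction $(x,t) \mapsto (x, (1-s)t)$ provides a deformation retraction of $V_i$ onto $\partial U_i$, so by the contraction property, $H_k(U_i, \partial U_i) \cong H_k(U_i, V_i)$. Taking $V_i$ open in $U_i$, the subset $\partial U_i$ is closed in $U_i$ with $\overline{\partial U_i} = \partial U_i \subset V_i = \mathrm{int}_{U_i}(V_i)$, so excision applies with the excised set $\partial U_i$ and gives $H_k(U_i, V_i) \cong H_k(U_i \setminus \partial U_i, V_i \setminus \partial U_i)$. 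Composing the two isomorphisms yields the first claim.

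For the second isomorphism, I would repeat the argument with $M_i$, $\pi^{-1}(\partial U_i)$, and $W_i = \pi^{-1}(V_i)$ in place of $U_i$, $\partial U_i$, $V_i$. The one nontrivial point is to construct a deformation retraction of $W_i$ onto $\pi^{-1}(\partial U_i)$. Using the collar trivialization $V_i \cong \partial U_i \times [0, 1)$, the assumption that $\pi$ restricts to a genuine $S^3$-bundle over $V_i \setminus \partial U_i$ allows the collar retraction to be lifted fiberwise: at time $s$ the $S^3$ fiber over $(x, t)$ is sent to the $S^3$ fiber over $(x, (1-s)t)$. Continuity at $t = 0$ is built into the quotient $M_i = T_1 U_i / \sim_\partial$, since the fibers over $\partial U_i$ are already collapsed to points. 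With this retraction in hand, the contraction property gives $H_k(M_i, \pi^{-1}(\partial U_i)) \cong H_k(M_i, W_i)$, and excision (with $\pi^{-1}(\partial U_i)$ closed in $M_i$ and $W_i$ open) gives $H_k(M_i, W_i) \cong H_k(M_i \setminus \pi^{-1}(\partial U_i), W_i \setminus \pi^{-1}(\partial U_i))$.

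The only real obstacle is the construction of the fibered deformation retraction of $W_i$, since the local trivializations provided by the bundle assumption degenerate at $\partial U_i$, and one has to appeal to the explicit pinching in the definition of $M_i$ to patch continuity across the singular fibers. Once that is handled, both isomorphisms are formal consequences of the general properties of relative homology.
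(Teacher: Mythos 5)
Your proof follows exactly the paper's two-step strategy: first use the contraction property to replace $\partial U_i$ by its collar $V_i$ (and $\pi^{-1}(\partial U_i)$ by $W_i$), then apply excision to remove $\partial U_i$ (resp.\ $\pi^{-1}(\partial U_i)$) from the pair. The paper states the deformation retraction of $W_i$ onto $\pi^{-1}(\partial U_i)$ without justification; your fiberwise lift of the collar retraction and the continuity check across the pinched fibers fill in that gap, but the overall argument is the same.
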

\begin{proof}

Since $V_i$ is a collar neighbourhood of $\partial U_i$, then  $V_i \cong \partial U_i \times [0, \varepsilon)$  and it contracts to $\partial U_i$. It follows that $W_i=\pi^{-1}(V_i)$ contracts to $\pi^{-1}(\partial U_i)$. 
Thus $H_i(U_i,\partial U_i)\cong H_i(U_i,V_i)$  and $H_i(M_i,W_i)\cong H_i(M_i,\pi^{-1}(\partial U_i))$ .
Now we can apply the excision property to the triples $M_i, \pi^{-1}(V_i) , \pi^{-1}(\partial U_i)$ and $U_i,V_i,\partial U_i $. Consequently, $H_k(U_i, \partial U_i) \cong H_k(U_i,  V_i) \cong H_k(U_i \setminus \partial U_i, {V_i} \setminus \partial U_i)$, and $H_k(M_i, \pi^{-1}(\partial U_i)) \cong H_k(M_i, W_i) \cong H_k(M_i \setminus \pi^{-1}(\partial U_i), W_i \setminus \pi^{-1}(\partial U_i))$.
\end{proof}

To simplify notations, we write
\[(X_i, K_i) :=(M_i \setminus \pi^{-1}(\partial U_i), W_i \setminus \pi^{-1}(\partial U_i ))
\text{ and }
(B_i, C_i) := (U_i \setminus \partial U_i, {V_i} \setminus \partial U_i)\]

Since, in our case,  $X_i$ is a fiber bundle with fiber $S^3$ and base $B_i$,  $C_i\subset B_i$ and $K_i=p^{-1}(C_i)$, we can apply the relative Gysin sequence.  
 
\subsection{Homology of $M_2$}

Substituting the relative homology groups $H_k(U_2,\partial U_2)\cong H_k(B_2, C_2)$,  computed in Proposition \ref{prop:relativehomology} part (a),   into the relative Gysin sequence for the $3$-sphere bundle $(X_2, K_2) \to (B_2, C_2)$ we obtain
\begin{itemize}
\item for $k = 0, 1, 2, 3, 4$, 
$$0 \cong H_{k-3}(B_2, C_2) \to H_k(X_2, K_2) \xto{\pi_*} H_k(B_2, C_2) \to H_{k-4}(B_2, C_2) \cong 0$$ 
$$\Longrightarrow \ H_k(X_2, K_2) \cong H_k(B_2, C_2).$$
\item for $k = 5, 6, 7$, 
$$0 \cong H_{k+1}(B_2, C_2) \to H_{k-3}(B_2, C_2) \to H_k(X_2, K_2) \xto{\pi_*} H_k(B_2, C_2) \cong 0$$
$$\Longrightarrow \ H_k(X_2, K_2) \cong H_{k-3}(B_2, C_2).$$
\end{itemize}
Summarizing, by the reasoning above and Lemma \ref{lemma:excision}, we have
$$H_k(M_2, \pi^{-1}(\partial U_2)) \cong H_k(X_2, K_2) \cong \left\{\begin{matrix}
\RR, & \text{ when } k = 2, 4, 5, 7, \\ 0, & \text{ for all other } k.
\end{matrix}\right.$$

Since $\pi^{-1}(\partial U_2) \cong S^2 \times S^1$, using the sequence in part  (1) of Theorem \ref{thm:proprelhomology} for the pair $(M_2, \pi^{-1}(\partial U_2))$, we have
\begin{itemize}
\item for $k = 5, 6, 7$, 
$$0 \cong H_k(\pi^{-1}(\partial U_2)) \to H_k(M_2) \to H_k(M_2, \pi^{-1}(\partial U_2)) \to H_{k-1}(\pi^{-1}(\partial U_2)) \cong 0$$
$$\Longrightarrow \ H_k(M_2) \cong H_k(M_2, \pi^{-1}(\partial U_2)).$$
\item for $k = 3, 4$,
$$0 \cong H_4(\pi^{-1}(\partial U_2)) \to H_4(M_2) \to H_4(M_2, \pi^{-1}(\partial U_2)) \xto{\partial_*} $$
$$\xto{\partial_*} H_3(\pi^{-1}(\partial U_2)) \to H_3(M_2) \to H_3(M_2, \pi^{-1}(\partial U_2)) \cong 0.$$
We show in Proposition \ref{prop:reliso} that $\partial_*$ is an isomorphism. It follows that 
$$H_3(M_2) \cong H_4(M_2) \cong 0.$$
\item for $k = 0, 1, 2$, we may apply Poincar\'e duality.
\end{itemize}
Thus, we get
$$H_k(M_2) \cong \left\{\begin{matrix}
\RR, & \text{ when } k = 0, 2, 5, 7, \\ 0, & \text{ for all other } k.
\end{matrix}\right.$$
\subsection{Homology of $M_3$}
Note that $U_3 = Q \setminus D^4$. According to Proposition \ref{prop:relativehomology} part (b), we have
$$H_k(B_3, C_3) \cong H_k(U_3, \partial U_3) \cong \left\{\begin{matrix}
\RR^2, & \text{ when } k = 2, \\
\RR, & \text{ when } k = 4, \\ 0, & \text{ for all other } k.
\end{matrix}\right.$$
and the relative Gysin sequence together with  Lemma \ref{lemma:excision} give
$$H_k(M_3, \pi^{-1}(\partial U_3)) \cong H_k(X_3, K_3) \cong \left\{\begin{matrix}
\RR^2, & \text{ when } k = 2, 5, \\ \RR, & \text{ when } k = 4, 7, \\ 0, & \text{ for all other } k.
\end{matrix}\right.$$
After applying the long exact sequence for the pair $(M_3, \pi^{-1}(\partial U_3))$, modulo the fact that $\partial_* : H_4(M_3, \pi^{-1}(\partial U_3)) \to H_3(\pi^{-1}(\partial U_3))$ is an isomorphism, we get
$$H_k(M_3) \cong \left\{\begin{matrix}
\RR^2, & \text{ when } k = 2, 5, \\ \RR, & \text{ when } k = 0, 7, \\ 0, & \text{ for all other } k.
\end{matrix}\right.$$

Thus, we recovered the computation of Betti numbers obtained in the previous sections without using the explicit description of the manifolds. 
\section{Some dynamical consequences of the topology of the double pendulum}

\subsection{Obstruction to the existence of geodesic flows}
The connection between Hamiltonian flows and geodesic flows was studied in \cite{Mccord2002}, where the authors proved the following
\begin{theorem}
If $P$ is a compact connected orientable manifold of dimension $2n-1,n>2$, and the torsion subgroup $T_{n-1}(P)$ is trivial, then a necessary condition for $P$ to be the unit tangent bundle of some orientable $n$-manifold is
\[
\left|\sum_{i=0}^{n-2}(-1)^i\beta_i(P)+(-1)^n(1-\beta_{2n-2}(P))\right|=1+\beta_1(P)-\beta_n(P).
\]
\end{theorem}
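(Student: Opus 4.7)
The plan is to apply the Gysin sequence to the unit tangent bundle projection $p \colon P \to M$, which is an orientable $S^{n-1}$-bundle over the closed orientable $n$-manifold $M$ (and $M$ is connected since $P$ is and the fibers are connected for $n \geq 2$). With $\RR$-coefficients,
\[
\cdots \to H_i(P) \xto{p_*} H_i(M) \xto{\Psi_\xi} H_{i-n}(M) \to H_{i-1}(P) \to \cdots
\]
collapses for $0 \leq i \leq n-2$ to $\beta_i(P) = \beta_i(M)$, because both $H_{i-n}(M)$ and $H_{i+1-n}(M)$ vanish for dimensional reasons. Poincar\'e duality on $P$ (of dimension $2n-1$) then gives $\beta_{2n-2}(P) = \beta_1(P) = \beta_1(M) = \beta_{n-1}(M)$. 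In degree $n$ the map $\Psi_\xi \colon H_n(M) \to H_0(M)$ is multiplication by the Euler characteristic $e(M)$. Reading off $\beta_{n-1}(P)$ and $\beta_n(P)$ from the Gysin sequence around this degree yields two regimes: $\beta_n(P) = \beta_1(M)$ and $\beta_{n-1}(P) = \beta_{n-1}(M)$ when $e(M) \neq 0$, while $\beta_n(P) = \beta_1(M) + 1$ and $\beta_{n-1}(P) = \beta_{n-1}(M) + 1$ when $e(M) = 0$.

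The torsion hypothesis enters by running the same Gysin argument with $\ZZ$-coefficients. In degree $n-1$ this produces a short exact sequence
\[
0 \to \ZZ / e(M)\ZZ \to H_{n-1}(P;\ZZ) \to H_{n-1}(M;\ZZ) \to 0,
\]
so the subgroup $\ZZ/e(M)\ZZ$ injects into $H_{n-1}(P;\ZZ)$. Since this subgroup is pure torsion when $e(M) \neq 0$, the assumption $T_{n-1}(P) = 0$ forces $\ZZ/e(M)\ZZ = 0$, i.e.\ $|e(M)| = 1$. Altogether the hypothesis restricts $e(M) \in \{-1, 0, 1\}$.

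The remainder is algebra. Substituting $\beta_i(P) = \beta_i(M)$ for $0 \leq i \leq n-2$ and $\beta_{2n-2}(P) = \beta_{n-1}(M)$, using Poincar\'e duality $\beta_i(M) = \beta_{n-i}(M)$ on $M$, and expanding the Euler characteristic as $e(M) = \sum_{i=0}^n (-1)^i \beta_i(M)$, a short direct computation collapses the left-hand side of the claimed formula to $e(M)$:
\[
\sum_{i=0}^{n-2}(-1)^i \beta_i(P) + (-1)^n \bigl(1 - \beta_{2n-2}(P)\bigr) = e(M).
\]
For the right-hand side, the case analysis above gives $1 + \beta_1(P) - \beta_n(P) = 1$ when $e(M) = \pm 1$ and $0$ when $e(M) = 0$. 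Because $|e(M)| \in \{0, 1\}$, both sides agree in each case and the identity follows. I expect the main obstacle to be the careful use of $\ZZ$-coefficients to pin down $|e(M)| \leq 1$ from the torsion hypothesis; the reduction of the left-hand side to $e(M)$ is routine but sign-sensitive, and one must keep the two cases $e(M) = 0$ and $e(M) = \pm 1$ straight throughout.
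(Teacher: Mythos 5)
The paper does not prove this theorem; it quotes it verbatim from McCord, Meyer, and Offin \cite{Mccord2002} and then applies it to the spheres $M_1,\dots,M_4$. There is therefore no in-paper proof to compare your argument against.

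That said, your proof is correct and is the natural Gysin-sequence argument one would expect the cited authors to use. The key steps all check out: for $0\le i\le n-2$ both $H_{i-n}(M)$ and $H_{i+1-n}(M)$ vanish, giving $\beta_i(P)=\beta_i(M)$; around degree $n$ the sequence $0\to H_1(M)\to H_n(P)\to H_n(M)\xto{\Psi}H_0(M)\to H_{n-1}(P)\to H_{n-1}(M)\to 0$ produces exactly your two regimes depending on whether $\chi(M)\ne 0$ or $\chi(M)=0$; Poincar\'e duality on $P$ (dimension $2n-1$) and on $M$ (dimension $n$) converts the left-hand side of the claimed identity into $\bigl|\sum_{i=0}^n(-1)^i\beta_i(M)\bigr|=|\chi(M)|$; and the integral Gysin sequence at $i=n-1$ gives the short exact sequence with $\ZZ/\chi(M)\ZZ$ embedding into $H_{n-1}(P;\ZZ)$, which the hypothesis $T_{n-1}(P)=0$ forces to be trivial, hence $\chi(M)\in\{-1,0,1\}$. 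The two sides then agree in every permitted case. Your proof is sound; the only thing to be aware of is that you are reproving a cited result, not replacing an argument the paper itself gives.
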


We describe the arguments in \cite{Mccord2002}, where this theorem is applied to the double spherical pendulum. In this case  $P$ is either $M_1$, $M_2$, $M_3$ or $M_4$. 
If $P=M_4$ then the torsion subgroup is $T_3(M_4)=\ZZ_4$ (see Table \ref{table:homology}). In this case, $M_4$ is the unit tangent bundle of $S^2\times S^2$ and, in fact, the flow is the geodesic flow of the Jacobi metric. 

In all the other cases the torsion subgroup is $T_3(M_i)=0$. So the theorem above would require 
the identity
\[
|\beta_0-\beta_1+\beta_2+1-\beta_6|=1+\beta_1-\beta_4
\]
to be satisfied. Since $\beta_0=1$ and $\beta_1=\beta_4=\beta_6 = 0$ for $M_1$, $M_2$, and $M_3$ (see Table \ref{table:Bettinumbers}) the identity 
reduces to $|2+\beta_2|=1$. Since $\beta_2$ is non-negative the identity is never satisfied. Thus for $i=1,2,3$ the flow on $M_i$ is not a geodesic flow.

\subsection{Obstruction to the  existence of global cross sections}

Global cross sections are a standard tool to extract global information about a flow from the return map.
Traditionally the cross section is either a compact manifold without boundary or a compact manifold whose boundary is invariant under the flow. 
We begin with defining the first kind of global cross section:

\begin{defn}
 Let $M$ be a connected manifold of dimension $m$ without boundary, $\Phi:\RR\times M\to M$
a flow, and $C$ a submanifold of $M$ of dimension $(m-1)$ without boundary then $C$ is a {\it global cross section}
if 
\begin{enumerate}
 \item For each point $p\in M$ there is a $t(p)>0$ such that $\Phi(t(p),p)\in C.$
\item There is a continuous function $\tau: C\to \RR$ such that
\begin{enumerate}
\item  $\Phi(t,p)\notin C$ for all $p\in C$ and $0<t<\tau(p)$.
\item $\Phi(\tau(p),p)\in C$ for all $p\in C$.
\end{enumerate}
\item There is an open neighbourhood $U$ of $\{0\} \times C$ in $\RR \times C$ such that $\Phi|_U$ is a homeomorphism from $U$
to an open neighbourhood of $C$ in $M$.
\end{enumerate}
The function $\tau$ is called the {\it return time}. The function $P:C\to C:p\to \Phi(\tau(p),p)$ is called the {\it Poincar\'e map}.
\end{defn}

In \cite{Mccord2000} the authors found some necessary conditions for the existence of global cross sections. In particular they proved:

\begin{theorem}\label{thm:crosssection}
 If the flow $\Phi:\RR\times M\to M$ on the manifold $M$ admits a global
cross section $C$, then
\begin{enumerate}
\item $M$ is a fiber bundle over $S^1$ with fiber $C$.
\item If $C$ is of finite type (i.e. its homology is finitely generated), then $\chi(M)=0$ (the Euler characteristic of $M$ is zero).
\item If $C$ is of finite type, $H_1(M;\ZZ)$ has a factor $\ZZ$.
\item The flow has no equilibrium points. 
\end{enumerate}
\end{theorem}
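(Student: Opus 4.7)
The plan is to prove the four assertions in order, using the mapping-torus structure provided by the global cross section for items~(1)--(3), and a direct definitional argument for item~(4). For item~(1), I would build an explicit homeomorphism from the mapping torus of the Poincar\'e return map $P : C \to C$ to $M$. Let $\Sigma$ denote the quotient of $\{(p, s) : p \in C, \ 0 \leq s \leq \tau(p)\} \subset C \times \RR$ by the relation $(p, \tau(p)) \sim (P(p), 0)$, and define $\Psi : \Sigma \to M$ by $\Psi([p,s]) := \Phi(s, p)$. The gluing relation makes $\Psi$ well defined; condition~(1) of the definition gives surjectivity; condition~(2) gives injectivity, since the return time $\tau$ bounds the parameter $s$ along each orbit; and condition~(3) supplies the local flow-box neighbourhoods that promote $\Psi$ to a local homeomorphism. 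Re-parametrising $s \mapsto s/\tau(p)$ then presents $\Sigma$ as a fibre bundle over $S^1 = [0,1]/(0 \sim 1)$ with fibre $C$.

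Item~(2) is then immediate from the multiplicativity of the Euler characteristic for fibrations with finite-type fibre: $\chi(M) = \chi(S^1) \cdot \chi(C) = 0$. For item~(3), the fibration $M \to S^1$ with fibre $C$ induces, from its long exact sequence of homotopy groups, a surjection onto $\pi_1(S^1) = \ZZ$ (in the connected-fibre case directly; otherwise the image of $\pi_1(M)$ is a nontrivial subgroup $n\ZZ \cong \ZZ$ of $\pi_1(S^1)$). Abelianising produces a surjection $H_1(M;\ZZ) \to \ZZ$, which splits because $\ZZ$ is free, and hence $H_1(M;\ZZ)$ contains a $\ZZ$ summand.

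For item~(4), any equilibrium $p$ gives an immediate contradiction: if $p \in C$, then $\Phi(t,p) = p \in C$ for all $t$, violating the requirement in condition~(2a) that $\Phi(t,p) \notin C$ on $0 < t < \tau(p)$; if $p \notin C$, condition~(1) demands that some forward image $\Phi(t(p),p)$ lie in $C$, but the orbit is constant and avoids $C$. The main obstacle is part~(1): although the ingredients (continuity of $\tau$ and the local product neighbourhood from condition~(3)) are all supplied by the axioms, verifying that $\Psi$ is a genuine homeomorphism---and in particular that its inverse is continuous globally---requires patching local inverses along each orbit and checking compatibility at the gluing locus $s = \tau(p)$. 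This is the only step where all four clauses of the cross-section definition must be used simultaneously; the remaining assertions are downstream consequences.
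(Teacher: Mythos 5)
The paper does not prove this theorem at all: it is quoted verbatim from McCord and Meyer \cite{Mccord2000}, and the text before the statement explicitly says ``In \cite{Mccord2000} the authors found some necessary conditions for the existence of global cross sections. In particular they proved:''. So there is no in-paper proof to compare against; you are supplying a proof of a cited black box.

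That said, your mapping-torus strategy is the standard route to this result and the overall structure is sound: item (4) is an immediate definitional contradiction exactly as you say; once $M$ is identified with a fibre bundle over $S^1$ with fibre $C$, item (2) follows from $\chi(S^1)=0$ (equivalently from the Wang/Mayer--Vietoris sequence of the mapping torus), and item (3) follows because the projection $M\to S^1$ sends $\pi_1(M)$ onto a nontrivial subgroup of $\ZZ$ (connectedness of $M$ forces the monodromy to permute the components of $C$ transitively), and a surjection of abelian groups onto $\ZZ$ splits. The one place where your sketch is genuinely too fast is the claim that ``condition (1) of the definition gives surjectivity'' of $\Psi\colon\Sigma\to M$. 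Condition (1) says every \emph{forward} orbit meets $C$, whereas to hit a given $q\in M$ by $\Phi(s,p)$ with $p\in C$ and $0\le s\le\tau(p)$ you need the \emph{backward} orbit of $q$ to meet $C$ at a last time before $q$. This requires an extra argument (e.g.\ show that the forward saturation of $C$ is both open, via the flow-box condition (3), and closed, which is where one typically invokes compactness of $M$ or of $C$); as stated the paper's definition imposes neither, so you should flag the hypothesis you use. Apart from that, your plan is correct and is what one finds in the cited source.
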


In the case of the double spherical pendulum it follows from  Table \ref{table:homology} that $H_1(M_i;\ZZ)=0$ for $i=1,2,3,4$. Consequently, by
Theorem \ref{thm:crosssection},  the flow on $M_i$ for $1=1,\ldots, 4$ does not have a global cross section of finite type.

\subsection{Obstruction to integrability}
In this subsection we mention how the topology of the configuration space is related to the integrability of the Hamiltonian system.
We use the following theorem essentially due to Taimanov (\cite{Kozlov}; see also \cite{Taimanov1988a})
\begin{theorem}
Suppose that the configuration space of a natural Hamiltonian system (i.e. the Hamiltonian is kinetic plus potential energy) with $n$ degrees of freedom is a connected analytic manifold $Q$, and the Hamiltonian function is analytic on the phase space. If this system has $n$ independent analytic integrals then 
\[
\beta_k(Q)\leq   \binom {n} {k}, \quad k=1,\ldots n,
\]
where $\beta_k(Q)= \binom {n} {k}$ if $Q$ is the n-dimensional torus. 
If $\beta_1(Q)=n$, then the inequalities are replaced by equalities. 
In particular, for an integrable system we have $\beta_1(Q)\leq n$.
\end{theorem}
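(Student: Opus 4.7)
The plan is to reduce the inequality to a cohomological statement about a single Liouville torus sitting inside phase space. First, invoke the Arnold--Liouville theorem: since $F_1 = H, F_2, \ldots, F_n$ are independent Poisson-commuting analytic integrals, the locus $M_{\mathrm{reg}} \subset T^*Q$ where $dF_1 \wedge \cdots \wedge dF_n \neq 0$ is the complement of a proper analytic subvariety, hence open and dense. On each compact connected component of a regular common level set, the commuting Hamiltonian flows integrate to a locally free $\TT^n$-action whose orbits are Lagrangian $n$-tori $T^n$. Because $H$ is natural (kinetic plus potential), for any $q \in Q$ and any sufficiently large $c$ the fiber $T^*_qQ$ meets $H^{-1}(c)$, which produces a plentiful supply of regular Liouville tori whose projection to $Q$ under $\pi_Q : T^*Q \to Q$ is surjective.

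Next, fix a regular Liouville torus $T^n \subset M_{\mathrm{reg}}$ with $\pi_Q(T^n) = Q$ and study the analytic projection $\pi := \pi_Q|_{T^n} : T^n \to Q$. The key claim is that the induced map on real cohomology
\[
\pi^* : H^k(Q;\RR) \to H^k(T^n;\RR)
\]
is injective. Granting this, the desired inequality $\beta_k(Q) \leq \beta_k(T^n) = \binom{n}{k}$, with the right-hand side computed via K\"unneth applied to $T^n \cong (S^1)^n$, follows immediately.

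The main obstacle, and the analytic heart of the theorem, is establishing injectivity of $\pi^*$. The strategy is to rule out any nontrivial kernel: if some $[\omega] \neq 0$ in $H^k(Q;\RR)$ satisfied $\pi^*[\omega] = 0$, then using the $\TT^n$-action on $M_{\mathrm{reg}}$ and analytic continuation one would propagate the vanishing along the whole foliation by Liouville tori, contradicting the fact that these tori project onto $Q$. This is precisely where \emph{all} the hypotheses are used in an essential way: analyticity (to carry out the continuation across singular fibers of the Liouville foliation), the natural form of $H$ (to guarantee the surjectivity $\pi_Q(T^n) = Q$ via the fibers of $T^*Q$), and the existence of $n$ commuting \emph{complete} flows on $T^*Q$ (to generate the global $\TT^n$-structure). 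This step is the crux of Taimanov's original argument; everything else is formal.

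Finally, for the equality assertions: if $Q \cong T^n$, choosing action--angle variables makes $\pi$ a covering onto $Q$, so $\pi^*$ is already an isomorphism and $\beta_k(Q) = \binom{n}{k}$ throughout. For the converse implication, if $\beta_1(Q) = n$ then the injection $\pi^*$ on $H^1$ already saturates at $n = \binom{n}{1}$; naturality of the cup product together with the fact that $H^*(T^n;\RR)$ is the exterior algebra on $H^1(T^n;\RR)$ then forces $\pi^*$ to be surjective degree by degree, yielding $\beta_k(Q) = \binom{n}{k}$ for all $k$ as claimed.
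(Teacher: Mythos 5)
The paper does not actually prove this theorem: it cites it as "essentially due to Taimanov" with pointers to Kozlov's monograph and Taimanov's 1988 paper, and then merely applies it. So there is no proof in the paper to compare against; what follows is an independent assessment of your argument.

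Your proposal has a genuine gap at its central step. You claim that one can fix a single regular Liouville torus $T^n$ with $\pi_Q(T^n)=Q$, but this is false in general. Consider the model case $Q=S^2$ with the round metric, so $n=2$ and the integrals are $H$ and the angular momentum $L_z$ about the $z$-axis. The regular Liouville tori in $T_1^*S^2$ are the common level sets $\{H=c,\ L_z=\ell\}$ with $\ell\neq 0,\pm 1$; the geodesics lying on such a torus are the great circles making a fixed nonzero angle with the equator, and these sweep out only the annular band $\{|z|\leq\sqrt{1-\ell^2}\}\subsetneq S^2$. So no single Liouville torus surjects onto $Q$. (The theorem still holds here, of course, since $\beta_1(S^2)=0\leq 2$; the point is that it cannot be proved by your route.) The sentence preceding your claim only shows that, for $c$ large, the \emph{union} of fibers of the Liouville fibration inside $H^{-1}(c)$ projects onto $Q$, which says nothing about any individual torus.

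Even granting surjectivity, the injectivity of $\pi^*:H^k(Q;\RR)\to H^k(T^n;\RR)$ would not follow. Injectivity on top-degree cohomology for a map between closed oriented $n$-manifolds is equivalent to the map having nonzero degree, and surjectivity of a smooth map does not imply nonzero degree. Your proposed fix — ``propagate the vanishing along the whole foliation by analytic continuation'' — is not an argument: analytic continuation acts on analytic functions, not on de Rham cohomology classes, and you have not specified any object to which it would apply, nor explained how a vanishing pullback on one leaf would constrain the class on $Q$. This is the analytic heart that your sketch defers, and it is precisely what Taimanov's actual proof supplies by entirely different means: a careful study of the momentum map $F:T^*Q\to\RR^n$, of the (proper analytic) singular locus of the Liouville fibration, and of the almost-abelian structure of $\pi_1(Q)$, from which the bound $\beta_k(Q)\leq\binom{n}{k}$ is extracted globally rather than by restricting to one torus. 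Your final paragraph on the equality case (using cup products and the exterior-algebra structure of $H^*(T^n;\RR)$) would be fine if injectivity of $\pi^*$ in degree one were in hand, but as it stands it rests on the unestablished step. To repair the argument you would need to replace the single-torus reduction by an analysis along the lines of Taimanov's, or at minimum find a correct mechanism (degree, transfer, or otherwise) that forces $H^*(Q;\RR)$ to embed in the cohomology of a torus.
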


In our case $Q=S^2\times S^2$, and  $n$=4. The Betti numbers of $Q$ were obtained in Subsection \ref{subsec:homologygroups}, so we obtain the following inequalities
\[\begin{split} & \beta_0=1\leq  \binom {4} {0}=1, ~\beta_1=0\leq\binom {4} {1}= 4,~\beta_2=2\leq \binom {4} {2}=6,\\
 & \beta_3=0\leq \binom {4} {3}=4,~\beta_4=1\leq\binom {4} {4}= 1\end{split}.\]
Hence, in this case, there is no topological obstruction to the integrability. This is not surprising since one can consider a Hamiltonian of the form $H=H_1+H_2$, where $H_1$ corresponds to  an integrable Hamiltonian on the first $S^2$ (e.g. the free particle constrained to move on a sphere) and  $H_2$ corresponds to  an integrable Hamiltonian on the second $S^2$. 

\section*{Acknowledgments}
The authors acknowledge with gratitude useful discussions pertinent to the present research with 
Chris McCord, Rick Moeckel, and  B. Doug Park. 
The third author was supported by a NSERC Discovery Grant. 
Part of this work was carried out when Eduardo Leandro and Manuele Santoprete were visiting the American Institute of Mathematics, and we are grateful for their hospitality and support.

\appendix
\section{}
\def\thesection{\Alph{section}} 
In this appendix, we show
\begin{prop}\label{prop:reliso}
The map $\partial_* : H_4(M_i, \pi^{-1}(\partial U_i)) \to H_3(\pi^{-1}(\partial U_i))$ is an isomorphism.
\end{prop}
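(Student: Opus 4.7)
The plan is to exploit the naturality of the connecting homomorphism $\partial_*$ for the map of pairs $\pi : (M_i, \pi^{-1}(\partial U_i)) \to (U_i, \partial U_i)$. Because the fibers of $\pi$ over $\partial U_i$ are pinched to single points in forming $M_i$, the restriction $\pi|_{\pi^{-1}(\partial U_i)}$ is a homeomorphism onto $\partial U_i$, so the induced map $\pi_* : H_3(\pi^{-1}(\partial U_i)) \to H_3(\partial U_i)$ is already an isomorphism. Naturality of $\partial_*$ then produces the commutative square
\[
\begin{array}{ccc}
H_4(M_i, \pi^{-1}(\partial U_i)) & \xto{\partial_*} & H_3(\pi^{-1}(\partial U_i)) \\
\pi_* \downarrow & & \downarrow \pi_* \\
H_4(U_i, \partial U_i) & \xto{\partial_*} & H_3(\partial U_i),
\end{array}
\]
in which the right vertical arrow is an isomorphism. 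Showing the top $\partial_*$ is an isomorphism therefore reduces to showing that both the left vertical and the bottom horizontal maps are isomorphisms.

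The excision identifications of Lemma \ref{lemma:excision} realize the left vertical arrow as the relative Gysin map $\pi_* : H_4(X_i, K_i) \to H_4(B_i, C_i)$, which sits in the exact sequence
\[
H_1(B_i, C_i) \to H_4(X_i, K_i) \xto{\pi_*} H_4(B_i, C_i) \to H_0(B_i, C_i).
\]
Here $H_0(B_i, C_i) \cong H_0(U_i, \partial U_i) = 0$ because $U_i$ is connected and $\partial U_i$ is nonempty. Likewise $H_1(U_i, \partial U_i) = 0$: for $i = 2$ this follows from the K\"unneth formula applied to $(D^2, S^1) \times S^2$, using that $H_*(D^2, S^1)$ is concentrated in degree $2$; for $i = 3$ it follows from Lemma \ref{lemma:relativeapp} together with $H_1(S^2 \times S^2) = 0$. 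With both adjacent terms of the Gysin segment vanishing, the middle map is an isomorphism.

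The bottom horizontal map is handled by the long exact sequence of the pair $(U_i, \partial U_i)$,
\[
H_4(U_i) \to H_4(U_i, \partial U_i) \xto{\partial_*} H_3(\partial U_i) \to H_3(U_i).
\]
Since $U_2 \simeq S^2$ and $U_3 \simeq S^2 \vee S^2$, one has $H_3(U_i) = H_4(U_i) = 0$ in both cases, so this $\partial_*$ is wedged between zero groups and is an isomorphism $\RR \to \RR$. Chasing the square then yields the desired conclusion. The only subtle point is verifying that the excision and Gysin isomorphisms intertwine correctly with the maps induced by $\pi$, but since every arrow in sight is induced by the single projection $\pi$, naturality of the constructions handles this uniformly; no calculation beyond what has already been recorded in the paper is required.
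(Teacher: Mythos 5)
Your proof is correct and follows essentially the same route as the paper's: the naturality square for $\pi : (M_i, \pi^{-1}(\partial U_i)) \to (U_i, \partial U_i)$, with the right vertical map an isomorphism because $\pi|_{\pi^{-1}(\partial U_i)}$ is a homeomorphism, the left vertical map an isomorphism via excision and the relative Gysin sequence, and the bottom $\partial_*$ an isomorphism for the pair $(U_i, \partial U_i)$. The only (cosmetic) difference is your justification of the bottom map via the long exact sequence and the vanishing of $H_3(U_i)$ and $H_4(U_i)$, whereas the paper simply invokes the boundary property (Theorem \ref{thm:proprelhomology}, part 6) directly.
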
 

Before the proof, we need to recall the \emph{naturality} of homology groups. Let $f : (M, A) \to (Y, B)$ be a continuous map, meaning
$$f : X \to Y \text{ is continuous and } f(A) \subset B.$$
Then $f$ induces natural maps between the homology groups:
$$f_* : H_k(X) \to H_k(Y), f_* : H_k(A) \to H_k(B) \text{ and } f_* : H_k(X, A) \to H_k(Y, B),$$
for all $k$.
In this sense, the map $\partial_*$ in the sequence \eqref{eq:longex} in the Theorem \ref{thm:proprelhomology} is also \emph{natural}, namely, $\partial_* \circ f_* = f_* \circ \partial_*$ in the following diagram,
$$
\text{
\xymatrix{
H_k(X, A) \ar[rr]^{\partial_*}\ar[d]_{f_*} && H_{k-1}(A) \ar[d]^{f_*}\\
H_k(Y, B) \ar[rr]^{\partial_*} && H_{k-1}(B)
}
}
$$

\begin{proof}
Consider $\pi : (M_i, \pi^{-1}(\partial U_i)) \to (U_i, \partial U_i)$. Then the naturality above implies $\partial_* \circ \pi_* = \pi_* \circ \partial_*$ in the following diagram:
$$
\text{
\xymatrix{
H_4(M_i, \pi^{-1}(\partial U_i)) \ar[rr]^{\partial_*}\ar[d]_{\pi_*} && H_3(\pi^{-1}(\partial U_i)) \ar[d]^{\pi_*}\\
H_4(U_i, \partial U_i) \ar[rr]^{\partial_*} && H_3(\partial U_i)
}
}
$$
We note that $\pi : \pi^{-1}(\partial U_i) \to \partial U_i$ is a homeomorphism, it follows that the $\pi_*$ on the right is an isomorphism. On the other hand, we note that the bottom map $\partial_*$ is an isomorphism, by the boundary property of relative homology. The computation using relative Gysin sequences also showed the left $\pi_*$ is also isomorphism. It implies that the top $\partial_*$ is an isomorphism as well.
\end{proof}

\end{document}